\theoremstyle{plain}
\newtheorem{theorem}{Theorem}[section]
\newtheorem{lemma}[theorem]{Lemma}
\newtheorem{cor}[theorem]{Corollary}
\newtheorem{conjecture}{Conjecture}
\theoremstyle{definition}
\newtheorem{definition}[theorem]{Definition}
\newtheorem{rem}[theorem]{Remark}
\newtheorem{example}[theorem]{Example}
\numberwithin{equation}{section}
\DeclareMathOperator{\GCD}{GCD}
\newcommand{\mb}{\mathbb}
\newcommand{\mc}{\mathcal}
\begin{document}
\title[Positivity of GCD tensors and their determinants]{Positivity of GCD tensors and their determinants}

\author{Projesh Nath Choudhury and Krushnachandra Panigrahy}
\address[P.N.~Choudhury]{Department of Mathematics, Indian Institute of Technology Gandhinagar, Gujarat 382355, India}
\email{\tt projeshnc@iitgn.ac.in}

\address[K.~Panigrahy]{Department of Mathematics, SRM University-AP, Amaravati, 522502, India}
\email{\tt kcp.224@gmail.com, krushnachandra.p@srmap.edu.in}

\date{\today}

\begin{abstract}
Let $S=\{s_{1},s_{2},\ldots,s_{n}\}$ be an ordered set of $n$ distinct positive integers.  The $m$th-order $n$-dimensional tensor $\mc{T}_{[S]}=(t_{i_{1}i_{2}\ldots i_{m}}),$ where $t_{i_{1}i_{2}\ldots i_{m}}=\GCD(s_{i_{1}},s_{i_{2}},\ldots,s_{i_{m}}),$ the greatest common divisor (GCD) of $s_{i_{1}},s_{i_{2}},\ldots,$ and $s_{i_{m}}$ is called the GCD tensor on $S$. The earliest result on GCD tensors goes back to Smith [\textit{Proc. Lond. Math. Soc.}, 1976], who computed the determinant of GCD matrix on $S=\{1,2,\ldots,n\}$ using the Euler's totient function, followed by Beslin--Ligh [\textit{Linear Algebra Appl.}, 1989] who showed all GCD matrices are positive definite. In this note, we study the positivity of higher-order tensors in the $k$-mode product. We show that all GCD tensors are strongly completely positive (CP).  We then show that GCD tensors are infinite divisible. In fact, we prove that for every {positive} real number $r,$ the tensor $\mc{T}_{[S]}^{\circ r}=(t^{r}_{i_{1}i_{2}\ldots i_{m}})$ is strongly CP. Finally, we obtain an interesting decomposition of GCD tensors using Euler's totient function $\Phi$. Using this decomposition, we show that the determinant (also called hyperdeterminant) of the $m$th-order GCD tensor $\mc{T}_{[S]}$ on a factor-closed set $S=\{s_1,\dots,s_n\}$ is $\prod\limits_{i=1}^{n} \Phi(s_{i})^{(m-1)^{(n-1)}}$.
\end{abstract}

\subjclass[2020]{15A69 (primary), 15A15, 15B48 (secondary)}

\keywords{GCD tensor, determinant of tensor, completely positive tensor, tensor decomposition, $K$-mode product.}

\maketitle

\section{Introduction and main results}

{\em Given integers $m$ and $n,$ we define $\mb{R}^{[m,n]}:$ the set of $m$th-order $n$-dimensional tensors; $\mb{R}_{+}^{n}:$ the set of all nonnegative vectors in $\mb{R}^{n}.$ For $\mc{A}\in\mb{R}^{I_{1}\times I_{2}\times \cdots \times I_{m}}$ and $B\in\mb{R}^{J\times I_{k}}$ where $1\leq k\leq m$ is an integer, the $k$-mode product is denoted by $\mc{A}\times_k B$ and defined elementwise as
\begin{equation*}
	(\mc{A}\times_{k}B)_{i_{1}i_{2}\ldots i_{k-1} j i_{k+1} \ldots i_{m}}=\sum_{i_{k}=1}^{I_{k}}\mc{A}_{i_{1}i_{2}\ldots i_{k-1} i_{k} i_{k+1} \ldots i_{m}}B_{ji_{k}}.
\end{equation*}}
\smallskip

Given integers $m$ and $n$, a tensor $\mc{A}=(a_{i_{1}i_{2}\ldots i_{m}})\in\mb{R}^{[m,n]}$ is said to be {\it symmetric} if its entries are invariant under any permutation of their indices. A symmetric tensor $\mc{A}\in\mb{R}^{[m,n]}$ is positive semidefinite (positive definite) if $\mc{A}x^{m}\geq 0 ~(\mc{A}x^{m}> 0)$ for all $0\neq x\in \mb{R}^{n}$. Clearly, positive semidefiniteness vanishes when the order of the tensor is odd. 
Positive definite and semidefinite even-order tensors have rich applications in several branches of science and engineering including stochastic processes, polynomial theory, spectral hypergraph theory, automatic control, magnetic resonance imaging \cite{benson2017, chen2012, chen2013, draisma2024, hu2012, huq2012, hu2015, jiang2016,  qi2010}. 

In 2013, Hillar--Lim \cite{hillar2013} showed that verifying the positive definiteness of a tensor is an NP-hard problem. Since then, the positive semidefiniteness of various structured tensors has been studied in the literature. Qi--Song \cite{qi2014a} showed that the class of an even-order $B$- ($B_{0}$-) tensor is positive definite (positive semidefinite). In \cite{li2014}, Li--Wang--Zhao--Zhu--Li proved that an even-order symmetric strong $H$-tensor with positive diagonal entries is a positive definite tensor. Furthermore, in 2015, Kannan--Shaked-Monderer--Berman \cite{kannan2015} showed that a symmetric $H$-tensor with nonnegative diagonal entries is positive semidefinite. In the same year, Qi \cite{qi2015} proved that an even-order strong Hankel tensor is positive semi-definite (PSD). Chen--Qi \cite{haibin2015} showed that an even-order symmetric Cauchy tensor is PSD if and only if its generating vector is positive. {Recently, Cui--Qi--Chen \cite{cui2025} proved that the even-order Pascal tensors are positive definite and the odd-order Pascal tensors are strongly completely positive. They also showed that the determinant of the $m$th-order two-dimensional symmetric Pascal tensor is $[(m-1)!]^m.$}

In this work, we explore the positivity of greatest common divisor tensors (GCD tensors). We need some basic definitions and notations to continue our discussion.
\begin{definition}\label{def:gcdtnsr2}
Throughout the manuscript, let $m, n \geq 2$ be integers.
\begin{enumerate}[(i)]  
    \item Let $S=\{s_{1},s_{2},\ldots,s_{n}\}$ be a set of $n$ distinct positive integers. Then the $m$th-order $n$-dimensional tensor $\mc{T}_{[S]}=(t_{i_{1}i_{2}\ldots i_{m}})\in\mb{R}^{[m,n]},$ such that 
	\[t_{i_{1}i_{2}\ldots i_{m}}=\GCD(s_{i_{1}},s_{i_{2}},\ldots,s_{i_{m}})\]
	is called the greatest common divisor (GCD) tensor on $S$.
    \item Euler's totient function $\Phi(n)$ counts the number of positive integers less than or equal to $n$ that are coprime to $n$.
\end{enumerate}
	
\end{definition}

The GCD matrices (second-order GCD tensors) were first studied by H.J.S. Smith in 1876. In \cite{smith1876}, he showed that for $S=\{1,2,\ldots,n\}$ the determinant of the GCD matrix $\mc{T}_{[S]}=(t_{i_{1}i_{2}})\in\mb{R}^{[2,n]}$ is $\Phi(1)\cdot \Phi(2)\cdots\Phi(n)$. A classical 1990 result of Beslin--Ligh \cite{beslin1989} showed that GCD matrices $\mc{T}_{[S]}\in\mb{R}^{[2,n]}$ are positive definite for any set $S=\{s_{1},s_{2},\ldots,s_{n}\}$ of distinct positive integers. The GCD matrices and their generalizations have since attracted the attention of number theorists and linear algebraists  \cite{ercan2017,bhat1991,haukkanen2018,haukkanen1997,lindstrom1969,mattila2014,wilf1968}. Recently, Guillot--Wu \cite{guillot2019} studied the total nonnegativity of GCD matrices.

In this short note, we study the positive definiteness of higher-order GCD tensors. To state our main results, we need the following definitions.
\begin{definition}\cite{qi2014}\label{dfn:cptnsr} Let $\mc{A}=(a_{i_{1}i_{2}\ldots i_{m}})\in\mb{R}^{[m,n]}$ be a symmetric tensor. Then
 $\mc{A}$ is said to be \textit{completely positive} (CP) if there exist $v_{1},v_{2},\ldots,v_{r}\in\mb{R}^{n}_{+}$ such that $\mc{A}=(v_{1})^{\otimes m} + (v_{2})^{\otimes m} + \cdots +(v_{r})^{\otimes m},$ where $\otimes$ denotes the outer product, and $(\cdot)^{\otimes m}=\underbrace{(\cdot)\otimes (\cdot)\otimes \cdots\otimes(\cdot)}_{m~{\rm times}}$. Moreover, if ${\rm span}\{v_{1},v_{2},\ldots,v_{r}\}=\mb{R}^{n}$, then $\mc{A}$ is called a \textit{strongly completely positive} tensor. 	
\end{definition}

{Our first main result shows that every GCD tensor is strongly CP.}
\begin{theorem}\label{thm:gcdcpt}
	Let $S=\{s_{1},s_{2},\ldots, s_{n}\}$ be a set of $n$ distinct positive integers. 
    Then the $m$th-order $n$-dimensional GCD tensor $\mc{T}_{[S]}$ on $S$ is a strongly completely positive tensor.
\end{theorem}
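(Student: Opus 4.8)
The plan is to produce an explicit completely positive decomposition of $\mc{T}_{[S]}$ straight out of the classical divisor–sum identity $\sum_{d\mid N}\Phi(d)=N$, and then to upgrade it to a \emph{strongly} CP decomposition by a short triangularity argument.

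First I would fix $L=\max_{1\le i\le n}s_i$ and, for each $d\in\{1,\dots,L\}$, introduce the $0/1$ vector $u_d\in\mb{R}^n_+$ whose $i$th coordinate is $1$ if $d\mid s_i$ and $0$ otherwise. Since a positive integer $d$ divides $\GCD(s_{i_1},\dots,s_{i_m})$ precisely when $d\mid s_{i_k}$ for all $k$, the totient identity applied to $N=\GCD(s_{i_1},\dots,s_{i_m})$ gives
\[
t_{i_1i_2\ldots i_m}=\GCD(s_{i_1},\dots,s_{i_m})=\sum_{d=1}^{L}\Phi(d)\,(u_d)_{i_1}(u_d)_{i_2}\cdots(u_d)_{i_m},
\]
i.e.\ $\mc{T}_{[S]}=\sum_{d=1}^{L}\Phi(d)\,u_d^{\otimes m}$ entrywise. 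Because $\Phi(d)\ge 1>0$ for every positive integer $d$, I can absorb the scalar by setting $v_d:=\Phi(d)^{1/m}u_d\in\mb{R}^n_+$; discarding those (if any) with $u_d=\0$, this exhibits $\mc{T}_{[S]}=\sum v_d^{\otimes m}$ as a completely positive tensor in the sense of Definition~\ref{dfn:cptnsr}, and it is exactly the totient-type decomposition that should also feed the determinant computation later in the paper.

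It then remains to verify the span condition ${\rm span}\{v_d\}=\mb{R}^n$, equivalently ${\rm span}\{u_d\}=\mb{R}^n$. The key observation is that the value $d=s_i$ occurs among the summation indices for every $i$ (since $s_i\mid s_i$ and $s_i\le L$), so it is enough to show that $u_{s_1},\dots,u_{s_n}$ are linearly independent. Relabelling so that $s_1<s_2<\cdots<s_n$, the $n\times n$ matrix $M$ with $M_{ij}=1$ if $s_i\mid s_j$ and $0$ otherwise satisfies $M_{ii}=1$ and $M_{ij}=0$ whenever $i>j$, because $s_i\mid s_j$ forces $s_j\ge s_i$. Hence $M$ is upper triangular with unit diagonal, therefore invertible, and its rows $u_{s_1},\dots,u_{s_n}$ form a basis of $\mb{R}^n$. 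This yields ${\rm span}\{v_d\}=\mb{R}^n$, so $\mc{T}_{[S]}$ is strongly completely positive.

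The CP half of the argument is immediate once the totient identity is written in the right form; I expect the only point needing a little care to be the strong-CP step --- isolating the subfamily $\{u_{s_i}\}$ and recognizing the triangular structure of the divisibility matrix $[\,s_i\mid s_j\,]$ after ordering $S$ increasingly.
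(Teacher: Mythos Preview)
Your proof is correct and follows essentially the same route as the paper: both produce the decomposition $\mc{T}_{[S]}=\sum_k \Phi(f_k)\,(E_k)^{\otimes m}$ via Gauss's identity $\sum_{d\mid N}\Phi(d)=N$ over a factor-closed set containing $S$ (you take the concrete choice $\{1,\dots,L\}$, the paper an arbitrary one), and both verify the span condition by isolating the subfamily indexed by $S$ itself and observing that the divisibility matrix $[\,s_i\mid s_j\,]$ is unit-triangular once $S$ is ordered increasingly. Your explicit absorption $v_d=\Phi(d)^{1/m}u_d$ is a touch more careful in matching Definition~\ref{dfn:cptnsr} than the paper's phrasing.
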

As an immediate consequence of this result, we provide an algorithm for deriving the strongly CP decomposition of GCD tensors.

We next study the infinite divisibility of the GCD tensor. 
\begin{definition}
Let $\mc{A}=(a_{i_{1}i_{2}\ldots i_{m}})\in\mb{R}^{[m,n]}$ be a positive semideﬁnite tensor with $a_{i_{1}i_{2}\ldots i_{m}}\geq 0$ for all $i_{1},i_{2},\ldots, i_{m}$. Then $\mc{A}$ is called inﬁnitely divisible if for every nonnegative real number $r$, the fractional Hadamard power $\mc{A}^{\circ r}:=(a_{i_{1}i_{2}\ldots i_{m}}^{r})$ is positive semideﬁnite. 
\end{definition}
Infinite divisible matrices and kernels are extensively studied in the literature. These matrices and kernels attracted renewed attention in 1966 when Loewner \cite{loewner1966} showed that under certain conditions the Green's function for Laplace's equation is an infinitely divisible kernel.  In \cite{luo2016}, Luo--Qi showed that the positive Cauchy tensor, symmetric Pascal tensor, and several classes of mean tensors are infinitely divisible -- more strongly, the fractional Hadamard powers of these classes of tensors are strongly completely positive. We obtain a similar result for GCD tensors.
\begin{theorem}\label{thrmfractpwr}
Let $S=\{s_{1},s_{2},\ldots, s_{n}\}$ be a set of distinct positive integers. Let $\mc{T}_{[S]}$ be the $m$th-order $n$-dimensional GCD tensor on $S$. Then $\mc{T}_{[S]}^{\circ r}$ is strongly completely positive for every {positive real number $r$. 
}
\end{theorem}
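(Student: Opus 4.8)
The plan is to extend the Euler--totient factorization underlying Theorem~\ref{thm:gcdcpt} from the GCD tensor itself to all of its fractional Hadamard powers. Fix a real number $r>0$ (for $r=0$ the tensor $\mc{T}_{[S]}^{\circ r}$ is just the all--ones tensor). First I would introduce the ``$r$--Jordan totient'' $\Phi_{r}:=\mathrm{Id}^{r}*\mu$, that is, $\Phi_{r}(k)=\sum_{d\mid k}\mu(k/d)\,d^{r}$. By M\"obius inversion this is exactly the arithmetic function satisfying $\sum_{d\mid k}\Phi_{r}(d)=k^{r}$ for every positive integer $k$, and $\Phi_{1}=\Phi$. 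Since $\Phi_{r}$ is multiplicative with $\Phi_{r}(p^{a})=p^{r(a-1)}(p^{r}-1)$, one gets $\Phi_{r}(k)>0$ for every $k\ge 1$ when $r>0$; this positivity, which rests on $r\ge 0$, is the only place number theory enters.

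Next I would fix the finite factor--closed set $E=\{\,d\in\mb{Z}_{>0}: d\mid s_{i}\text{ for some }i\,\}$, which contains $S$, and for each $d\in E$ define the $0$--$1$ vector $u_{d}\in\mb{R}^{n}_{+}$ by $(u_{d})_{i}=1$ if $d\mid s_{i}$ and $(u_{d})_{i}=0$ otherwise. For any indices $i_{1},\dots,i_{m}$, every divisor of $\GCD(s_{i_{1}},\dots,s_{i_{m}})$ divides each $s_{i_{j}}$ and hence lies in $E$, so
\begin{equation*}
t_{i_{1}i_{2}\ldots i_{m}}^{\,r}=\GCD(s_{i_{1}},\dots,s_{i_{m}})^{r}=\sum_{d\mid \GCD(s_{i_{1}},\dots,s_{i_{m}})}\Phi_{r}(d)=\sum_{d\in E}\Phi_{r}(d)\,(u_{d})_{i_{1}}(u_{d})_{i_{2}}\cdots(u_{d})_{i_{m}}.
\end{equation*}
In other words $\mc{T}_{[S]}^{\circ r}=\sum_{d\in E}\Phi_{r}(d)\,u_{d}^{\otimes m}=\sum_{d\in E}\bigl(\Phi_{r}(d)^{1/m}u_{d}\bigr)^{\otimes m}$, and since $\Phi_{r}(d)>0$ and $\Phi_{r}(d)^{1/m}u_{d}\in\mb{R}^{n}_{+}$, this is a completely positive decomposition of $\mc{T}_{[S]}^{\circ r}$.

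It then remains to check that the decomposing vectors span $\mb{R}^{n}$. After relabelling we may assume $s_{1}<s_{2}<\cdots<s_{n}$; each $s_{j}$ lies in $E$. Let $Z$ be the $n\times n$ matrix with $Z_{ij}=(u_{s_{j}})_{i}$, i.e.\ $Z_{ij}=1$ iff $s_{j}\mid s_{i}$, so that the $j$th column of $Z$ is $u_{s_{j}}$. If $j>i$ then $s_{j}>s_{i}>0$, hence $s_{j}\nmid s_{i}$ and $Z_{ij}=0$; thus $Z$ is lower triangular with all diagonal entries equal to $1$, hence invertible. Therefore $\{u_{s_{1}},\dots,u_{s_{n}}\}\subseteq\{u_{d}:d\in E\}$ is a basis of $\mb{R}^{n}$, so the decomposition displayed above is strongly completely positive, which proves the theorem; taking $r=1$ recovers Theorem~\ref{thm:gcdcpt}.

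The argument is short once the right object is in hand, so there is no serious obstacle; the two points that require attention are the sign computation $\Phi_{r}\ge 0$ (handled by multiplicativity together with the prime--power formula, and crucially using $r\ge 0$), and the observation that for $r>0$ no divisor $d\in E$ is lost to a vanishing coefficient, so the triangular--matrix spanning argument from the $r=1$ case applies verbatim.
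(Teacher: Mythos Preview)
For $r>0$ your argument is exactly the paper's: the paper first proves the general Theorem~\ref{thm:gcdmult} (that $g[\mc{T}_{[S]}]$ is strongly CP whenever $g$ is multiplicative with $g*\mu>0$) via the same factor--closed decomposition and lower--triangular spanning argument you give, and then deduces Theorem~\ref{thrmfractpwr} by taking $g(k)=k^{r}$; you have simply carried this out directly with the Jordan totient $\Phi_{r}=\mathrm{Id}^{r}*\mu$. Your prime--power computation $\Phi_{r}(p^{a})=p^{r(a-1)}(p^{r}-1)>0$ is in fact cleaner than the paper's one--line positivity claim.

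One caveat on the case you set aside: for $r=0$ the tensor $\mc{T}_{[S]}^{\circ 0}$ is the all--ones tensor, and this is \emph{not} strongly CP when $n\ge2$. Indeed, if $J=\sum_{i}v_{i}^{\otimes m}$ with $v_{i}\in\mb{R}^{n}_{+}$, then for even $m$ and any $x$ with $\sum_{j}x_{j}=0$ one has $0=Jx^{m}=\sum_{i}(v_{i}^{T}x)^{m}$, forcing each $v_{i}\perp x$; varying $x$ over the hyperplane $\sum_{j}x_{j}=0$ shows every $v_{i}$ is a nonnegative multiple of $(1,\dots,1)^{T}$, so the $v_{i}$ cannot span $\mb{R}^{n}$ (an analogous argument works for odd $m$). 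The paper's own proof likewise treats only $r>0$, so the ``nonnegative'' in the statement should be read as ``positive''; your dismissal of $r=0$ as obvious is therefore a small gap, but one shared with the paper.
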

\begin{rem}
	{If $r=0$ in the preceding theorem then $\mc{T}^{\circ r}_{[S]}$ is a tensor with every entry $1,$ and hence $\mc{T}^{\circ 0}_{[S]}=v^{\otimes m},$ where $v=(1,1,\ldots,1)^{T}\in\mb{R}^{n}$. Thus $\mc{T}^{\circ 0}_{[S]}$ is a completely positive tensor.}
\end{rem}

We now turn our attention to the determinant of the GCD tensor. The study of hyperdeterminants of tensors was initiated by Cayley \cite{cayley1845} in 1845, and was revived a century later in 1994 by  Gelfand--Kapranov--Zelevinsky \cite{gelfand2008}. In 2005, Qi \cite{qi2005}  introduced the notion of symmetric hyperdeterminant of symmetric tensors to study the eigenvalues of symmetric tensors.  In 2013, Hu--Huang--Ling--Qi \cite{hu2013}  defined the determinant of all $m$th-order $n$-dimensional tensors using the resultant of multivariate polynomials. To define the determinant of a tensor, we need the following notation. Let $\mc{A}\in\mb{R}^{[m,n]}$ and $x=(x_1,{x_{2},}\ldots,x_n)^T\in\mb{R}^{n}$. Then
\[\mc{A}x^{m-1}
=\mc{A}\times_{2}x^{T}\times_{3}\cdots\times_{m}x^{T}=\left(\sum_{i_{2},\ldots,i_{m}=1}^{n}a_{i_{1}i_{2}\ldots i_{m}}x_{i_{2}}\cdots x_{i_{m}}\right)\in \mb{R}^{n}.\]  Now define the multivariate homogeneous polynomials of degree $m-1$
\[F_{k}(x_{1},x_{2},\ldots,x_{n}):=\sum_{i_{2},\ldots,i_{m}=1}^{n}a_{ki_{2}\ldots i_{m}}x_{i_{2}}\cdots x_{i_{m}} \hbox{ for } 1\leq k\leq n.\]
Equipped with these $n$ polynomials, we are now ready to define the determinant of a tensor.
\begin{definition}\label{defndet}
Let $\mc{A}\in\mb{R}^{[m,n]}$ with $m\geq 2$. Let $F_1,{F_{2},}\ldots,F_n\in \mathbb{R}[x_1,{x_{2},}\ldots,x_n]$ be homogeneous polynomials of degree $m-1$ as defined above. Then the determinant of $\mc{A}$, denoted by $\det(\mc{A})$, is the resultant of the ordered set of polynomials $F_1,{F_{2},}\ldots, F_n$.  
\end{definition}
From the above definition, it is easy to verify that the tensor determinant generalizes the well-known matrix determinant and symmetric hyperdeterminant studied by Qi \cite{qi2005}.

We now obtain the determinant of the GCD tensor. To state our next main result, we first recall the following basic definition.
\begin{definition}
 A set $S$ of positive integers is said to be {\it factor-closed} (FC), if whenever $d$ divides $s\in S$, then $d\in S.$ 
\end{definition}

Our final main result gives the determinant of the GCD tensor $\mc{T}_{[S]}$ on a factor-closed $S$ in terms of the Euler totient function.

\begin{theorem}\label{detgcd}
Let $S=\{s_{1},s_{2},\ldots, s_{n}\}$ be a factor-closed set of distinct positive integers. Let $\mc{T}_{[S]}$ be the $m$th-order $n$-dimensional GCD tensor on $S$.  Then 
\[\det{(\mc{T}_{[S]})}=\prod_{i=1}^{n} \Phi(s_{i})^{(m-1)^{(n-1)}}.\]
\end{theorem}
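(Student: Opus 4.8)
The plan is to leverage the Euler totient decomposition of $\mc{T}_{[S]}$ promised in the abstract (the "interesting decomposition" underlying Theorem~\ref{thm:gcdcpt}), which on a factor-closed set should take the clean form
\[
\mc{T}_{[S]} = \sum_{d \in S} \Phi(d)\, (e_d)^{\otimes m},
\]
where $e_d \in \mb{R}^n$ is the $0$--$1$ vector with $(e_d)_i = 1$ iff $d \mid s_i$. This is the tensor analogue of the classical Beslin--Ligh factorization $\mc{T}_{[S]} = E D E^T$ for GCD matrices, with $E = ((e_d)_i)$ and $D = \diag(\Phi(d) : d \in S)$; it follows from the identity $\sum_{d \mid \GCD(s_{i_1},\dots,s_{i_m})} \Phi(d) = \GCD(s_{i_1},\dots,s_{i_m})$, which holds because $S$ is factor-closed so the divisors $d$ range exactly over elements of $S$. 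First I would establish (or cite from the earlier section) that on a factor-closed set the matrix $E$ is an $n\times n$ matrix that is, up to reordering $S$ so that $s_i \mid s_j \Rightarrow i \le j$, lower-triangular with $1$'s on the diagonal, hence $\det E = \pm 1$.

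Next I would feed this decomposition into the determinant formula. Writing $F_k(x) = \sum_{d\in S}\Phi(d)(e_d)_k \big(\langle e_d, x\rangle\big)^{m-1}$, the system $F_1 = \cdots = F_n = 0$ defining $\det(\mc{T}_{[S]})$ as a resultant is obtained from the "diagonal" system $G_k(y) := \Phi(s_k)\, y_k^{m-1} = 0$ by the \emph{linear change of variables} $y = E^T x$ together with the row combination given by $E$: concretely $F(x) = E\, G(E^T x)$ in vector notation. I would then invoke the standard multiplicativity of the resultant under such operations. The resultant of the decoupled system $G_k(y) = \Phi(s_k) y_k^{m-1}$ is $\prod_{k=1}^n \Phi(s_k)^{(m-1)^{n-1}}$ (each $y_k^{m-1}$ contributes, and the exponent $(m-1)^{n-1}$ is the Bézout-type count coming from the other $n-1$ polynomials each of degree $m-1$). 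Applying an invertible linear substitution $y = E^T x$ multiplies the resultant by $\det(E^T)^{(m-1)^{n-1}\cdot(\text{something})} = (\pm 1)^{(\cdots)} = 1$ since $\det E = \pm 1$; likewise forming $F = E\,G$ multiplies by a power of $\det E = \pm 1$. Hence $\det(\mc{T}_{[S]}) = \prod_{i=1}^n \Phi(s_i)^{(m-1)^{n-1}}$, possibly up to a sign which one pins down by specializing to $S = \{1\}$ extended trivially, or by noting the tensor is CP hence has nonnegative determinant.

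The main obstacle is making the resultant bookkeeping rigorous: unlike the matrix case, $\det(\mc{A} \times_1 P \times_2 Q \cdots)$-type transformation rules for tensor determinants are more delicate, and I must cite or re-derive the precise statement of how $\det(\cdot)$ transforms under (a) a linear change of the polynomial variables and (b) taking invertible linear combinations of the defining polynomials $F_k$. The relevant facts are in Hu--Huang--Ling--Qi~\cite{hu2013}: for $\mc{A}\in\mb{R}^{[m,n]}$ and invertible $P, Q \in \mb{R}^{n\times n}$ one has a formula of the shape $\det(\mc{A}\cdot(P,Q,\dots,Q)) = \det(P)^{(m-1)^{n-1}} \det(Q)^{n(m-1)^{n-1}} \det(\mc{A})$ (or the symmetric-tensor specialization thereof), and here $\det P = \det Q = \pm 1$ kills every such factor. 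The secondary point needing care is that the resultant is defined for an \emph{ordered} tuple of polynomials and I should check the reordering of $S$ does not affect $\det(\mc{T}_{[S]})$ — but this is immediate since a permutation of indices acts by a permutation matrix, again of determinant $\pm 1$, and the determinant of a symmetric tensor is permutation-invariant. Once the transformation law is quoted cleanly, the computation is a one-line substitution.
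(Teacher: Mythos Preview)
Your proposal is correct and follows essentially the same route as the paper: factor $\mc{T}_{[S]}=\mc{D}\times_{1}E\times_{2}\cdots\times_{m}E$ with $\mc{D}$ diagonal and $E$ unit lower-triangular after ordering $s_1<\cdots<s_n$, then use multiplicativity of the tensor determinant to reduce to $\det(\mc{D})=\prod_i\Phi(s_i)^{(m-1)^{n-1}}$. The only refinement is that the paper packages the ``resultant bookkeeping'' you flag as an obstacle via Shao's general tensor product and the Shao--Shan--Zhang formula $\det(\mc{A}\cdot\mc{B})=\det(\mc{A})^{(k-1)^{n-1}}\det(\mc{B})^{(m-1)^{n}}$, which applied twice to $E\cdot\mc{D}\cdot E^{T}$ gives the answer cleanly; note also that $\det E=+1$ exactly, so the sign worry never arises.
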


We now explain the organization of the paper. In the next section, we present the proofs of Theorems \ref{thm:gcdcpt} and \ref{thrmfractpwr}, and provide an algorithm for computing the strongly CP decomposition of GCD tensors. Section \ref{sec:det} is dedicated to studying the determinant of the GCD tensor. In this section, we establish our final main result, Theorem \ref{detgcd}, by developing a new factorization of the GCD tensor. Furthermore, we discuss the determinants of the GCD tensor on a generalization of the factor-closed set. The paper concludes by extending the concept of the GCD tensor on the meet semi-lattice in Section \ref{sec:poset}.

On a more philosophical note, CP tensors and strongly CP tensors have rich applications in diverse areas, including statistics, computer vision, exploratory multiway data analysis, blind source separation, and higher-degree polynomial optimization \cite{cichocki2009,fan2014,qi2014,shashua2005}. To understand and explore the applications of any class of tensors, we need a vast collection of examples. The first main result of this paper provides a new class of examples of strongly CP tensors. This class of examples is easy to construct and arises from any set of distinct positive integers and their GCDs. Moreover, our second main result shows that from any GCD tensor, one can provide uncountably many strongly CP tensors using fractional Hadamard powers. We hope this class of examples will be a stem for a wide range of applications.

\section{Strongly completely positivity of GCD tensors}\label{sec:cp}
In this section, we prove two of our main results -- Theorems \ref{thm:gcdcpt} and \ref{thrmfractpwr}, and outline the additional work required to complete their proofs.  We begin by proving Theorem \ref{thm:gcdcpt}. This requires a basic lemma due to Gauss for Euler's totient function.

\begin{lemma}\label{gauss}
For any positive integer $k$, $\sum\limits_{d\mid k}\Phi(d)=k$.
\end{lemma}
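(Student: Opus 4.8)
The plan is to establish the identity by partitioning the set $\{1,2,\ldots,k\}$ according to the value of $\gcd(j,k)$. First I would observe that for each $j$ with $1\le j\le k$ there is exactly one divisor $e$ of $k$ with $\gcd(j,k)=e$, so that $\{1,2,\ldots,k\}=\bigsqcup_{e\mid k}A_{e}$ is a disjoint union, where $A_{e}:=\{j:\ 1\le j\le k,\ \gcd(j,k)=e\}$. Comparing cardinalities yields $k=\sum_{e\mid k}\abs{A_{e}}$.

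The second step is to compute $\abs{A_{e}}$. Writing $j=ea$, the constraints $1\le j\le k$ and $\gcd(j,k)=e$ translate precisely to $1\le a\le k/e$ and $\gcd(a,k/e)=1$, so $\abs{A_{e}}$ equals the number of integers in $[1,k/e]$ coprime to $k/e$, namely $\Phi(k/e)$. Hence $k=\sum_{e\mid k}\Phi(k/e)$, and since the map $e\mapsto k/e$ permutes the divisors of $k$, the right-hand side equals $\sum_{d\mid k}\Phi(d)$, which is the assertion.

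An alternative route is to use that $k\mapsto\sum_{d\mid k}\Phi(d)$ is a Dirichlet convolution of two multiplicative functions, hence itself multiplicative, and then to verify the formula on prime powers $p^{a}$, where the sum telescopes: $\Phi(1)+\Phi(p)+\cdots+\Phi(p^{a})=1+(p-1)+(p^{2}-p)+\cdots+(p^{a}-p^{a-1})=p^{a}$; the general identity follows by factoring $k$ into prime powers. I do not anticipate any genuine obstacle in either approach — the only point demanding a little care is the elementary bijection $j\leftrightarrow(e,a)$ in the first proof (or the telescoping computation in the second) — since this is a classical fact, included here solely as a preparatory tool for the proof of Theorem \ref{thm:gcdcpt}.
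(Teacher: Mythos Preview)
Your proof is correct; both the partition-by-$\gcd$ argument and the multiplicativity/telescoping alternative are standard and valid. Note, however, that the paper does not actually prove this lemma: it is stated without proof as ``a basic lemma due to Gauss'' and invoked as a well-known prerequisite for Theorem~\ref{thm:gcdcpt}. So there is nothing to compare against --- you have supplied a full argument where the paper simply cites the classical result.
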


\begin{proof}[Proof of Theorem \ref{thm:gcdcpt}]
We begin by showing that $\mc{T}_{[S]}$ is completely positive. Let $F=\{f_{1},f_{2},\ldots,f_{l}\}$ be a factor-closed set containing $S$~ (so $l\geq n$). Define the matrix $E:=(e_{ij})\in\mb{R}^{n\times l}$ such that
\begin{equation}\label{matrixE}
	e_{ij}:=\begin{cases}
		1,&\mathrm{if }~f_{j}|s_{i},\\
		0,&\mathrm{otherwise.}
	\end{cases}
\end{equation}
We claim that the $m$th-order $n$-dimensional GCD tensor $\mc{T}_{[S]}$ can be decomposed as
\begin{equation}\label{eqn:mre}
	\mc{T}_{[S]}=\sum_{k=1}^{l}\Phi(f_{k})(E_{k})^{\otimes m},
\end{equation}
where $E_{k}$ denotes the $k$th column of $E$. Fix $1\leq i_1,{i_{2},}\ldots,i_m\leq n$.  To prove the equality \eqref{eqn:mre} it suffices to show that the $(i_{1},i_{2},\ldots,i_{m})$th element of the summation  $\sum\limits_{k=1}^{l}\Phi(f_{k})(E_{k})^{\otimes m}$ agrees with the $(i_{1},i_{2},\ldots,i_{m})$th element of  $\mc{T}_{[S]}$.  But notice that for $1\leq k\leq l$, the $(i_{1},i_{2},\ldots,i_{m})$th element of $(E_{k})^{\otimes m}$ is $e_{i_{1}k} e_{i_{2}k}\cdots e_{i_{m}k}$ which is equal to $1$ if $f_{k}|s_{i_{1}}, f_{k}|s_{i_{2}}, \ldots ,f_{k}|s_{i_{m}}$, otherwise $0$. Thus,
\begin{eqnarray*}
	\left(\sum_{k=1}^{l}\Phi(f_{k})(E_{k})^{\otimes m}\right)_{i_{1}{i_{2}}\ldots i_{m}}&=&\sum_{f_{k}\mid s_{i_{1}},f_{k}\mid s_{i_{2}},\ldots,f_{k}\mid s_{i_{m}}}\Phi(f_{k})\\
	&=&\sum_{f_{k}\mid \GCD(s_{i_{1}},s_{i_{2}},\ldots,s_{i_{m}})}\Phi(f_{k})\\	&=&\GCD(s_{i_{1}},s_{i_{2}},\ldots,s_{i_{m}}).
\end{eqnarray*}
The last equality follows from Lemma \ref{gauss}. Thus $\mc{T}_{[S]}$ is completely positive, since $\Phi(f_{k})>0$ for all $1\leq k \leq n$. 

We next show that ${\rm span} \{E_1,{E_{2},}\ldots,E_l\}=\mathbb{R}^n$. Let $S'=\{s_{r_1},{s_{r_2}},\ldots, s_{r_n}\}$ be the rearrangement of the elements of $S$ such that $s_{r_1}<s_{r_2}<\cdots< s_{r_n}$ and let $F'=\{f_{r_1},f_{r_2},\ldots,f_{r_l}\}$ be obtained by rearranging the elements of  $F$ such that $f_{r_1}=s_{r_1}, f_{r_2}=s_{r_2},\ldots, f_{r_n}=s_{r_n}$. Define the matrix $E':=(e'_{ij})\in\mb{R}^{n\times l}$ elementwise as follows:
\begin{equation}
	e'_{ij}:=\begin{cases}
		1,&\text{if } f_{r_j} \text{ divides }s_{r_i},\\
		0,&\text{otherwise.}
	\end{cases}
\end{equation}
Then there exist permutation matrices $P\in \mathbb{R}^{n \times n}$ and $Q\in \mathbb{R}^{l \times l}$ such that $E'=PEQ$. Thus ${\rm rank}(E')={\rm rank}(E).$ To show that the columns of $E$ span $\mb{R}^{n}$, it is sufficient to show that $E'$ has rank $n$. Now, on writing the first $n (\leq {l})$ columns of the matrix $E'$ explicitly we have
\begin{equation*}
	E'=\begin{pmatrix}
		1&0&0&\cdots&0&0&*&\cdots&*\\
		*&1&0&\cdots&0&0&*&\cdots&*\\
		*&*&1&\cdots&0&0&*&\cdots&*\\
		\vdots&\vdots&\vdots&\ddots&\vdots&\vdots&*&\cdots&*\\
		*&*&*&\cdots&1&0&*&\cdots&*\\
		*&*&*&\cdots&*&1&*&\cdots&*
	\end{pmatrix}_{n\times {l}}.
\end{equation*}
Thus the first $n$ columns of the matrix $E'\in\mb{R}^{n\times {l}}$ are linearly independent. Since $n\leq {l}$,  ${\rm rank}(E')=n$.  Hence $\mc{T}_{[S]}$ is strongly completely positive. 
\end{proof}
In the above proof, we provided a strongly CP decomposition \eqref{eqn:mre} of GCD tensors. We now provide an algorithm to derive such a decomposition.
\begin{algorithm}[H]
\scriptsize
\caption{Strongly CP decomposition of a GCD tensor}
\begin{algorithmic}[1]
\State Enter a set of $n$ distinct positive integers $S=\{s_{1},{s_{2},}\ldots,s_{n}\}$.
\State Construct a factor-closed set $F=\{f_{1},{f_{2},}\ldots,f_{l}\}$ containing $S.$
\State Construct an $n\times l$ zero matrix $E=(e_{ij})$.
\For {$i=1:n$}
\For {$j=1:l$}
\If{$f_{j}|s_{i}$}
\State  $e_{ij}=1$;
\Else
\State $e_{ij}=0$;
\EndIf
\EndFor
\EndFor
\State $E_{k}:=$ $k$th column of the matrix $E;$
\State \Return $\mc{T}_{[S]}=\sum_{k=1}^{l}\Phi(f_{k})(E_k)^{\otimes m}.$
\end{algorithmic}
\end{algorithm}

 In 2014,  Qi--Xu--Xu \cite{qi2014} showed that all the $H$-eigenvalues of a strongly CP tensor are always positive.  As an immediate consequence of Theorem \ref{thm:gcdcpt}, we have the following corollary.
\begin{cor}\label{cor:hevgcdt}
	All the $H$-eigenvalues of a GCD tensor are always positive.
\end{cor}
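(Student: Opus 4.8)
The plan is to read the statement as an immediate corollary of the strong complete positivity established in Theorem \ref{thm:gcdcpt}, combined with the result of Qi--Xu--Xu \cite{qi2014} that every strongly CP tensor has only positive $H$-eigenvalues. Recall that a real number $\lambda$ is an $H$-eigenvalue of $\mc{A}\in\mb{R}^{[m,n]}$ if there is a nonzero real vector $x=(x_1,\ldots,x_n)^T$ with $\mc{A}x^{m-1}=\lambda\,x^{[m-1]}$, where $x^{[m-1]}:=(x_1^{m-1},\ldots,x_n^{m-1})^T$. Since Theorem \ref{thm:gcdcpt} shows that $\mc{T}_{[S]}$ is strongly CP, the cited theorem applies verbatim and the conclusion follows. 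So the core of the work is already done upstream, and the only task here is to assemble the two ingredients.

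To exhibit the underlying mechanism explicitly (rather than treating \cite{qi2014} as a black box), I would contract the eigen-relation with $x$ itself. Taking the inner product of both sides of $\mc{T}_{[S]}x^{m-1}=\lambda\,x^{[m-1]}$ with $x$ yields
\[
\mc{T}_{[S]}x^{m}=\lambda\sum_{i=1}^{n}x_i^{m}.
\]
Using the strongly CP decomposition \eqref{eqn:mre}, namely $\mc{T}_{[S]}=\sum_{k=1}^{l}\Phi(f_{k})(E_{k})^{\otimes m}$, the left-hand side equals $\sum_{k=1}^{l}\Phi(f_{k})(E_{k}^{T}x)^{m}$. For even $m$ each summand is nonnegative, and since $\Phi(f_{k})>0$ the total vanishes only if $E_{k}^{T}x=0$ for every $k$; but the columns $E_{1},\ldots,E_{l}$ span $\mb{R}^{n}$ by the span argument in the proof of Theorem \ref{thm:gcdcpt}, which forces $x=0$, a contradiction. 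Hence $\mc{T}_{[S]}x^{m}>0$, and as $\sum_{i}x_i^{m}>0$ for even $m$ and $x\neq 0$, dividing gives $\lambda>0$.

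The point that must be handled carefully is the \emph{strictness} of the inequality: plain complete positivity would only secure $\lambda\geq 0$, and it is precisely the span condition ${\rm span}\{E_{1},\ldots,E_{l}\}=\mb{R}^{n}$, that is, the strongly CP property, that upgrades this to $\lambda>0$. The only genuine obstacle is the parity of $m$: the contraction argument above is transparent for even $m$, where both $\mc{T}_{[S]}x^{m}$ and $\sum_{i}x_i^{m}$ are sign-definite, but for odd $m$ these quantities need not be, so one must instead invoke the full Qi--Xu--Xu characterization \cite{qi2014} of positive $H$-eigenvalues for strongly CP tensors. Since Theorem \ref{thm:gcdcpt} delivers strong complete positivity for every $m\geq 2$, this caveat does not affect the stated conclusion.
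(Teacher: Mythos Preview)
Your proposal is correct and follows the same approach as the paper: the corollary is stated as an immediate consequence of Theorem~\ref{thm:gcdcpt} together with the Qi--Xu--Xu result \cite{qi2014} that strongly CP tensors have only positive $H$-eigenvalues. Your additional unpacking of the even-order case via the contraction argument is a helpful elaboration, but the paper itself simply cites \cite{qi2014} without further detail.
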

Notice that the strongly completely positive decomposition \eqref{eqn:mre} of $\mc{T}_{[S]}$ depends on a factor-closed set containing $S$. We next show that such a decomposition can be obtained using a GCD-closed set $F$ containing $S$ and the generalized Euler's totient function on $F$. To proceed further, we introduce the following notation.

\begin{definition}[\cite{bhat1991}]
Let $S=\{s_{1},s_{2},\ldots,s_{n}\}$ be a set of positive integers. 
	\begin{itemize}
		\item [(i)] $S$ is said to be \textit{greatest common divisor closed} (GCD-closed) if whenever $s_{i}$ and $s_{j}$ are in $S$, their GCD is also in $S$.
		\item [(ii)] The {\it generalized Euler's totient function} on $S$ is defined inductively as
		\begin{equation*}
			\Psi_{S}(s_{j})=s_{j}-\sum_{s_{i}|s_{j},s_{i}\neq s_{j}}\Psi_{S}(s_{i}),
		\end{equation*}
		where the empty summation is taken to be zero.
	\end{itemize}
\end{definition} 
Here are some straightforward observations from the above definition.
\begin{rem}
	\begin{itemize}
		\item [(i)] Every factor-closed set is a GCD-closed set, but the converse is not true. 
		\item [(ii)] The generalized Euler's totient function depends on the ground set $S$. If $S$ is a factor-closed set, by Lemma \ref{gauss}, $\Psi_S (s_i)=\Phi (s_i)$ for all $s_i \in S$.
	\end{itemize}
\end{rem}

Using these preliminaries, we now derive another strongly completely positive decomposition of $\mc{T}_{[S]}$ in terms of the generalized Euler's totient function.

\begin{theorem}
	Let $S=\{s_{1},s_{2},\ldots,s_{n}\}$ be a set of distinct positive integers. Let $F=\{f_{1},f_{2},\ldots,f_{l}\}$ be a GCD-closed set containing $S$. Then, the $m$th-order $n$-dimensional GCD tensor $\mc{T}_{[S]}$ can be expressed as
	\begin{equation}\label{eqn:mr}
		\mc{T}_{[S]}=\sum_{k=1}^{l}\Psi_{F}(f_{k})(E_{k})^{\otimes m},
	\end{equation}
	where $\Psi_{F}(\cdot)$ is the generalized Euler's totient function on $F$, and $E_{k}$ denotes the $k$th column of the $n\times l$ matrix $E=(e_{ij})$ such that    \begin{equation*}
		e_{ij}=\begin{cases}
			1,&\mbox{if }f_{j}\mbox{ divides } s_{i},\\
			0,&\mathrm{otherwise.}
		\end{cases}
	\end{equation*} 
\end{theorem}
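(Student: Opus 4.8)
The plan is to mimic the proof of Theorem~\ref{thm:gcdcpt} almost verbatim, replacing the factor-closed set by the given GCD-closed set $F$ and replacing $\Phi$ by the generalized totient $\Psi_F$. Fix $1\leq i_1,\dots,i_m\leq n$. As in the proof of \eqref{eqn:mre}, the $(i_1,\dots,i_m)$th entry of $(E_k)^{\otimes m}$ is $e_{i_1 k}\cdots e_{i_m k}$, which equals $1$ exactly when $f_k\mid s_{i_1},\dots,f_k\mid s_{i_m}$, i.e. when $f_k\mid \GCD(s_{i_1},\dots,s_{i_m})$, and is $0$ otherwise. Hence the $(i_1,\dots,i_m)$th entry of the right-hand side of \eqref{eqn:mr} is $\sum_{f_k\mid \GCD(s_{i_1},\dots,s_{i_m})}\Psi_F(f_k)$, and it remains to show this sum equals $\GCD(s_{i_1},\dots,s_{i_m})$.

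The crux is therefore the identity $\sum_{f_k\in F,\ f_k\mid g}\Psi_F(f_k)=g$ whenever $g=\GCD(s_{i_1},\dots,s_{i_m})$. First I would observe that, since $F$ is GCD-closed and contains $S\ni s_{i_1},\dots,s_{i_m}$, the element $g$ actually lies in $F$ (it is an iterated GCD of members of $F$). So it suffices to prove $\sum_{f\in F,\ f\mid f_j}\Psi_F(f)=f_j$ for every $f_j\in F$. This is immediate from the defining recursion $\Psi_F(f_j)=f_j-\sum_{f_i\mid f_j,\,f_i\neq f_j}\Psi_F(f_i)$: simply move the sum to the left-hand side. (One should note the sum ranges over $f_i\in F$ dividing $f_j$, matching the index set of the recursion exactly; a one-line induction on the number of divisors of $f_j$ lying in $F$ makes this rigorous if desired.)

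Finally, for the ``strongly'' part, the span argument is identical to that in Theorem~\ref{thm:gcdcpt}: rearranging $S$ in increasing order as $s_{r_1}<\cdots<s_{r_n}$ and choosing $F'=\{f_{r_1},\dots,f_{r_l}\}$ with $f_{r_1}=s_{r_1},\dots,f_{r_n}=s_{r_n}$ (possible since $S\subseteq F$), one obtains permutation matrices $P,Q$ with $E'=PEQ$, and $E'$ has an $n\times n$ unit lower-triangular leading block, so $\operatorname{rank}(E)=\operatorname{rank}(E')=n$ and $\operatorname{span}\{E_1,\dots,E_l\}=\mathbb{R}^n$. The only genuine subtlety — and the one point I would check carefully — is the claim that $\GCD(s_{i_1},\dots,s_{i_m})\in F$, which needs GCD-closedness applied iteratively to more than two elements; everything else is a transcription of the earlier proof with $\Psi_F$ in place of $\Phi$ and a trivial rearrangement of the recursion playing the role of Lemma~\ref{gauss}.
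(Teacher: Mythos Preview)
Your proposal is correct and is exactly the approach the paper intends: the paper itself omits the proof, saying only that it ``is similar to that of Theorem~\ref{thm:gcdcpt} and is left to the interested reader,'' and you have filled in precisely those details, with the crucial observation that $\GCD(s_{i_1},\dots,s_{i_m})\in F$ by iterated GCD-closedness, so that the defining recursion for $\Psi_F$ plays the role of Lemma~\ref{gauss}. Note that the theorem as stated asserts only the decomposition \eqref{eqn:mr}, not strong complete positivity, so your final span paragraph is extra (though correct); the claim proper is settled by your first two paragraphs.
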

The proof is similar to that of Theorem \ref{thm:gcdcpt} and is left to the interested reader.

\bigskip
We next turn our attention to the Schur (Hadamard) product of GCD tensors. For any two tensors $\mc{A}=(a_{i_{1}i_{2}\ldots{i_{m}}}), \mc{B}=(b_{i_{1}i_{2}\ldots{i_{m}}})\in\mb{R}^{[m,n]}$, their Schur product, denoted by $\mc{A}\circ\mc{B}$, is a tensor in $\mb{R}^{[m,n]}$ such that 
\begin{equation*}
	(\mc{A}\circ\mc{B})_{i_{1}i_{2}\ldots{i_{m}}}:=a_{i_{1}i_{2}\ldots{i_{m}}}\cdot b_{i_{1}i_{2}\ldots{i_{m}}}.
\end{equation*}
It is well known that the Schur product of two positive semidefinite matrices is positive semidefinite. However, Qi--Luo \cite{qi2017} in 2017 showed that the Schur product of two positive semidefinite tensors of order $m\geq4$ does not necessarily preserve positive definiteness (see Example 5.7 in \cite{qi2017}). In this sequel, a natural question is, what if we consider the Hadamard power of a positive semidefinite tensor? Unlike the case of matrices, the positive definite tensors of order $m\geq 4$ may fail to be positive definite. This can be seen in the following example.
\begin{example}
	Let $\mc{A}=(a_{ijkl})\in\mb{R}^{[4,2]}$ be a fourth-order symmetric tensor with elements:
	\begin{align*}
		\mc{A}(:,:,1,1)&=\begin{pmatrix}
			3 & -2\\
			-2 & 1
		\end{pmatrix},&\mc{A}(:,:,1,2)&=\begin{pmatrix}
			-2  & 1\\
			1  & 1
		\end{pmatrix},\\
		\mc{A}(:,:,2,1)&=\begin{pmatrix}
			-2 & 1\\
			1 & 1
		\end{pmatrix},&\mc{A}(:,:,2,2)&=\begin{pmatrix}
			1 & 1\\
			1 & 1
		\end{pmatrix}.
	\end{align*}
	The $H$-eigenvalues of the tensor $\mc{A}$ are $0.5013, 0.9798, 3.8065, 8.3941.$ The Hadamard square of $\mc{A}$, denoted as $\mc{A}^{\circ 2}$, is obtained by squaring each element of $\mc{A}$ entrywise:
	\begin{align*}
		\mc{A}^{\circ 2}(:,:,1,1)&=\begin{pmatrix}
			9 & 4\\
			4 & 1
		\end{pmatrix},&\mc{A}^{\circ 2}(:,:,1,2)&=\begin{pmatrix}
			4  & 1\\
			1  & 1
		\end{pmatrix},\\
		\mc{A}^{\circ 2}(:,:,2,1)&=\begin{pmatrix}
			4 & 1\\
			1 & 1
		\end{pmatrix},&\mc{A}^{\circ 2}(:,:,2,2)&=\begin{pmatrix}
			1 & 1\\
			1 & 1
		\end{pmatrix}.
	\end{align*}
	The $H$-eigenvalues of $ \mc{A}^{\circ 2}$ are $-2.1138$ and $  20.0391.$ Since the $H$-eigenvalues of a positive semidefinite tensor are always nonnegative, $\mc{A}^{\circ 2}$ is not positive semidefinite. The $H$-eigenvalues in this example are obtained using the {\rm MATLAB} package \texttt{TenEig} $2.0$, developed by Chen--Han--Zhou \cite{chen2016}.
	\end{example}
	In 2016, Luo--Qi \cite{luo2016} showed that the classes of CP tensors and strongly CP tensors are closed under the Hadamard product. Since GCD tensors are strongly CP, we have the following corollary.
	\begin{cor}\label{cor:cphp}
	Let $U=\{u_{1},u_{2},\ldots,u_{n}\}$ and $V=\{v_{1},v_{2},\ldots,v_{n}\}$ be the sets of distinct positive integers, and $\mc{T}_{[U]}$, $\mc{T}_{[V]}$ be the $m$th-order $n$-dimensional GCD tensors on the sets $U$ and $V$ respectively. Then $\mc{T}_{[U]}\circ\mc{T}_{[V]},$ is strongly CP. 
\end{cor}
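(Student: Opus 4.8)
The plan is to deduce the corollary directly from Theorem \ref{thm:gcdcpt} together with the rank-one behaviour of the Schur product, rather than invoking the closure theorem of Luo--Qi as a black box. First I would apply Theorem \ref{thm:gcdcpt} to each factor separately: choosing factor-closed sets $F=\{f_1,\dots,f_l\}\supseteq U$ and $G=\{g_1,\dots,g_q\}\supseteq V$, the proof of that theorem yields the strongly CP decompositions
\begin{equation*}
\mc{T}_{[U]}=\sum_{k=1}^{l}\Phi(f_k)(E_k)^{\otimes m}, \qquad \mc{T}_{[V]}=\sum_{p=1}^{q}\Phi(g_p)(D_p)^{\otimes m},
\end{equation*}
where $E=(e_{ik})$ and $D=(d_{ip})$ are the associated $0/1$ incidence matrices and, crucially, $\mathrm{span}\{E_1,\dots,E_l\}=\mathrm{span}\{D_1,\dots,D_q\}=\mb{R}^n$.

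The elementary identity driving the argument is that the Schur product of two rank-one symmetric tensors is again rank-one: for $a,b\in\mb{R}^n$ one has $(a^{\otimes m})\circ(b^{\otimes m})=(a\circ b)^{\otimes m}$, since the $(i_1,\dots,i_m)$th entry of either side equals $\prod_{t=1}^{m}a_{i_t}b_{i_t}$. Expanding the Schur product of the two decompositions and applying this identity termwise gives
\begin{equation*}
\mc{T}_{[U]}\circ\mc{T}_{[V]}=\sum_{k=1}^{l}\sum_{p=1}^{q}\Phi(f_k)\Phi(g_p)\,(E_k\circ D_p)^{\otimes m}.
\end{equation*}
Each coefficient $\Phi(f_k)\Phi(g_p)$ is positive and each vector $E_k\circ D_p$ is nonnegative (indeed $0/1$-valued), so absorbing the positive scalars into the vectors exhibits $\mc{T}_{[U]}\circ\mc{T}_{[V]}$ as a sum of $m$th outer powers of nonnegative vectors; this already proves complete positivity.

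The one remaining point, and the only place any genuine argument is required, is to verify that the Schur-product vectors still span, i.e. $\mathrm{span}\{E_k\circ D_p : 1\le k\le l,\ 1\le p\le q\}=\mb{R}^n$. I would prove this by duality. Suppose $z\in\mb{R}^n$ is orthogonal to every $E_k\circ D_p$; writing $\diag(z)$ for the diagonal matrix with entries $z_i$, the condition $z\cdot(E_k\circ D_p)=0$ is exactly $E_k^{T}\diag(z)D_p=0$ for all $k,p$. Since $\{E_k\}$ and $\{D_p\}$ each span $\mb{R}^n$, bilinearity forces $x^{T}\diag(z)y=0$ for all $x,y\in\mb{R}^n$, whence $\diag(z)=0$ and $z=0$. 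Thus the spanning condition survives the Schur product and $\mc{T}_{[U]}\circ\mc{T}_{[V]}$ is strongly CP. I do not expect this step to be a true obstacle: both inputs are strongly CP by Theorem \ref{thm:gcdcpt}, and the diagonal-matrix duality argument above is precisely the mechanism behind the Luo--Qi closure result \cite{luo2016}, which one could alternatively cite to conclude in a single line.
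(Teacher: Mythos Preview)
Your argument is correct. The paper, however, does not give an explicit proof at all: it simply observes that Theorem~\ref{thm:gcdcpt} makes $\mc{T}_{[U]}$ and $\mc{T}_{[V]}$ strongly CP, and then invokes the Luo--Qi result \cite{luo2016} that the class of strongly CP tensors is closed under the Hadamard product. Your write-up instead unpacks that closure step from first principles---the rank-one identity $(a^{\otimes m})\circ(b^{\otimes m})=(a\circ b)^{\otimes m}$, bilinear expansion, and the $\diag(z)$ duality argument for the spanning condition---which, as you yourself note, is exactly the mechanism behind \cite{luo2016}. So the two routes coincide at the level of ideas; the difference is only that the paper cites the closure result as a black box in one line, whereas you supply a self-contained proof. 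Your version has the advantage of being independent of the external reference and of making transparent why the spanning property survives the Schur product; the paper's version is of course shorter.
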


The above result immediately implies that for any given set $S=\{s_{1},s_{2},\ldots,s_{n}\}$ of distinct positive integers, the positive integer Hadamard power of the $m$th-order GCD tensor $\mc{T}_{[U]}$ is strongly CP. It is natural to ask {whether the}  {positive} fractional powers of the $m$th-order GCD tensor have the same property.  Our second main result \ref{thrmfractpwr} provides a positive answer to this question. In fact we prove a stronger result. To continue our discussion, we need three basic definitions.
\begin{definition}
	\begin{enumerate}[(i)]
    \item Let $S\subseteq \mb{R}$ and $f: S\rightarrow \mb{R}$ be a function. We define $f[\mc{A}]:=(f(a_{i_{1}{i_{2}}\ldots i_{m}}))$ for all $\mc{A}=(a_{i_{1}{i_{2}}\ldots i_{m}})\in \mb{R}^{[m,n]}$ with $a_{i_{1}{i_{2}}\ldots i_{m}} \in S$.
		\item A function $f:\mb{N}\to \mb{R}$ is said to be multiplicative if $f(xy)=f(x)f(y)$ whenever $\GCD(x,y)=1$.
		\item The Dirichlet convolution of two multiplicative functions $f$ and $g$ is denoted and defined as
		\[(f*g)(k):=\sum\limits_{d|k}f(d)g\left(\frac{k}{d}\right).\]
		\item [(iii)] The {\it M\"obius function} $\mu:\mathbb{N}\to \{0,\pm 1\}$ is defined by 
		\begin{equation*}
			\mu(k)=\begin{cases}
				1,&\text{ if }n=1,\\
				(-1)^{l},&\text{ if $k$ is the product $l$ distinct primes for $l\geq 1$},\\
				0,&\text{ otherwise}.
			\end{cases}
		\end{equation*}
	\end{enumerate}
\end{definition}
Clearly, the M\"obius function is multiplicative. It is well known that for any multiplicative function $g$, $\sum\limits_{d|p}(g*\mu)(d)=g(p).$

We now prove that for any set $S=\{s_1,{s_{2},}\ldots,s_n\}$ of distinct positive integers and for any multiplicative map  $g$ with all $(g*\mu)(p)>0$, the tensor $(g(\GCD(s_{i_{1}},s_{i_{2}},\ldots,s_{i_{m}})))\in \mb{R}^{[m,n]}$ is strongly CP.

\begin{theorem}\label{thm:gcdmult}
Let $S=\{s_{1},s_{2},\ldots, s_{n}\}$ be a set of distinct positive integers and  $\mc{T}_{[S]}=(t_{i_{1}i_{2}\ldots i_{m}})$ be an $m$th-order $n$-dimensional GCD tensor on $S$. Let $g:\mathbb{N}\to \mathbb{R}$ be a multiplicative map with $(g*\mu)(p)>0$ for all $p\in \mb{N}$. Then the tensor $g[\mc{T}_{[S]}]=(g(t_{i_{1}i_{2}\ldots i_{m}}))\in \mb{R}^{[m,n]}$ is strongly CP.
\end{theorem}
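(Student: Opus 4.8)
The plan is to mimic the proof of Theorem \ref{thm:gcdcpt}, replacing the totient weights $\Phi(f_k)$ by the Dirichlet-convolution weights $(g*\mu)(f_k)$. First I would choose a factor-closed set $F=\{f_{1},\dots,f_{l}\}$ containing $S$, and define the same $n\times l$ incidence matrix $E=(e_{ij})$ with $e_{ij}=1$ if $f_{j}\mid s_{i}$ and $0$ otherwise, exactly as in \eqref{matrixE}. The claim to establish is the decomposition
\begin{equation*}
	g[\mc{T}_{[S]}]=\sum_{k=1}^{l}(g*\mu)(f_{k})\,(E_{k})^{\otimes m},
\end{equation*}
where $E_k$ is the $k$th column of $E$.

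To verify this entrywise, fix indices $1\le i_{1},\dots,i_{m}\le n$ and set $d:=\GCD(s_{i_{1}},\dots,s_{i_{m}})$. As in the proof of Theorem \ref{thm:gcdcpt}, the $(i_{1},\dots,i_{m})$th entry of $(E_{k})^{\otimes m}$ equals $e_{i_{1}k}\cdots e_{i_{m}k}$, which is $1$ precisely when $f_{k}$ divides each of $s_{i_{1}},\dots,s_{i_{m}}$, i.e.\ when $f_{k}\mid d$. Since $F$ is factor-closed and $d\mid s_{i_{1}}\in S\subseteq F$, every divisor of $d$ appears among the $f_{k}$, so the right-hand side entry is $\sum_{e\mid d}(g*\mu)(e)$. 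By the identity recalled just before the theorem, $\sum_{e\mid d}(g*\mu)(e)=g(d)=g(\GCD(s_{i_{1}},\dots,s_{i_{m}}))$, which is the $(i_{1},\dots,i_{m})$th entry of $g[\mc{T}_{[S]}]$. This proves the decomposition. Because $(g*\mu)(f_{k})>0$ for all $k$ by hypothesis (noting $(g*\mu)(1)=g(1)=1>0$ since $g$ is multiplicative and not identically zero), the tensor $g[\mc{T}_{[S]}]$ is completely positive.

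For the ``strongly'' part, I would reuse verbatim the linear-independence argument from the proof of Theorem \ref{thm:gcdcpt}: reorder $S$ and $F$ so that the smallest $n$ elements of $F$ are exactly the elements of $S$ in increasing order, obtaining $E'=PEQ$ for permutation matrices $P,Q$; then the first $n$ columns of $E'$ form a unit lower-triangular $n\times n$ block, hence are linearly independent, so $\operatorname{rank}(E)=\operatorname{rank}(E')=n$ and $\{E_{1},\dots,E_{l}\}$ spans $\mb{R}^{n}$. Therefore $g[\mc{T}_{[S]}]$ is strongly CP. The only genuinely new ingredient relative to Theorem \ref{thm:gcdcpt} is swapping the Gauss identity $\sum_{d\mid k}\Phi(d)=k$ for the convolution identity $\sum_{e\mid d}(g*\mu)(e)=g(d)$, so I do not expect a serious obstacle; the one point to state carefully is why the positivity hypothesis $(g*\mu)(p)>0$ on primes, together with multiplicativity of $g*\mu$, forces $(g*\mu)(f_k)>0$ for all $f_k$ (each $f_k$ is a product of prime powers and $g*\mu$ is multiplicative, while positivity at prime powers — not just primes — should be checked or the hypothesis read as covering all $p\in\mb{N}$ as written). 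Finally I would note that Theorem \ref{thrmfractpwr} follows immediately by taking $g(k)=k^{r}$, since $k\mapsto k^{r}$ is multiplicative and a short computation gives $(g*\mu)(p)=p^{r}-p^{(r\cdot \text{lower})}>0$ for prime powers $p$.
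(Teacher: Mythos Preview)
Your proposal is correct and follows essentially the same approach as the paper: choose a factor-closed $F\supseteq S$, define the incidence matrix $E$ as in \eqref{matrixE}, establish the decomposition $g[\mc{T}_{[S]}]=\sum_{k}(g*\mu)(f_{k})(E_{k})^{\otimes m}$ via the identity $\sum_{d\mid k}(g*\mu)(d)=g(k)$, and invoke the rank argument from Theorem~\ref{thm:gcdcpt} for the spanning condition. Your hesitation about the positivity hypothesis is unnecessary: in the statement, $p$ ranges over all of $\mb{N}$ (not just primes), so $(g*\mu)(f_{k})>0$ is given directly and no multiplicativity argument for $g*\mu$ at prime powers is needed.
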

\begin{proof}
	Let $F=\{f_{1},f_{2},\ldots,f_{l}\}$ be a factor-closed set containing $S.$ Define the matrix $E=[E_1,{E_{2},}\ldots,E_l]\in\mb{R}^{n\times l}$ as in \eqref{matrixE}, where $E_1,{E_{2},}\ldots,E_l$ are the columns of $E$. Then ${\rm span}\{E_1,{E_{2},}\ldots,E_l\}=\mb{R}^n$. Since $(g*\mu)(p)>0$ for all $p \in \mb{N}$, to complete the proof it suffices to show that 
	\[g[\mc{T}_{[S]}]=\sum_{k=1}^{l}(g*\mu)(f_{k})(E_{k})^{\otimes m}.\]

Since $\sum\limits_{d|p}(g*\mu)(d)=g(p)$, by a similar argument as in the proof of Theorem \ref{thm:gcdcpt}, we have \[\left(\sum_{k=1}^{l}(g*\mu)(f_{k})(E_{k})^{\otimes m}\right)_{i_{1}{i_{2}}\ldots i_{m}}=g(\GCD(s_{i_{1}},s_{i_{2}},\ldots,s_{i_{m}})),\]
for all $1\leq i_1,{i_{2},}\ldots,i_m\leq n$. Thus $g[\mc{T}_{[S]}]$ is strongly CP.
\end{proof}
As an application of the above result, we now have our proof of the second main result.
\begin{proof}[Proof of Theorem \ref{thrmfractpwr}]
Let $r$ be a positive real number and define $g(p):=p^r$ for all $p \in \mb{N}$. Then $g$ is multiplicative and $(g*\mu)(p)>0$ for all $p \in \mb{N}$. By Theorem \ref{thm:gcdmult}, $g[\mc{T}_{[S]}]$ is strongly CP. Thus $\mc{T}^{\circ r}_{[S]}$ is strongly CP for all $r>0$.
\end{proof}
\section{Determinant of GCD the Tensor}\label{sec:det}
The main goal of this section is to prove Theorem 1.8. This requires the notion of the general product of two tensors and some preliminary results.
In 2013, Shao \cite{shao2013} introduced the notion of the general product (or composite) of two $n$-dimensional tensors.
\begin{definition}\cite[Definition 1.1]{shao2013}
	Let $\mc{A}\in\mb{R}^{[m,n]}$ and $\mc{B}\in\mb{R}^{[k,n]}$. The general product $\mc{A}\cdot\mc{B}$ of $\mc{A}$ and $\mc{B}$ is an $((m-1)(k-1)+1)$th-order $n$-dimensional tensor, and is defined elementwise:
	\begin{eqnarray*}
		(\mc{A}\cdot\mc{B})_{i_{1}\alpha_{1}\ldots \alpha_{(m-1)}}=\sum_{j_{2},\ldots,j_{m}=1}^{n}a_{i_{1}j_{2}\ldots j_{m}}b_{j_{2}\alpha_{1}}\cdots b_{j_{m}\alpha_{(m-1)}},
	\end{eqnarray*}
	where $i_{1}\in[n],\alpha_{1},\ldots,\alpha_{m-1}\in[n]^{k-1}=[n]\times\cdots \times[n]$ and $[n]=\{1,2,\ldots,n\}$.
\end{definition}
The general product of tensors satisfies many interesting properties, including: (i) The associative law. (ii) When the two tensors are matrices (or one is a matrix and the other is a vector), the general tensor product coincides with the usual matrix product.

We now recall four preliminary results involving the general product which are crucial to prove Theorem \ref{detgcd}.  The first result is a formula of Shao--Shan--Zhang to derive the determinant of the general product of two tensors. 
\begin{theorem}\cite[Theorem 2.1]{shao2013a}\label{thmdet2}
	Let $\mc{A}\in\mb{R}^{[m,n]}$ and $\mc{B}\in\mb{R}^{[k,n]},$ where $m,k\geq2.$ Then,
	\[\det(\mc{A}\cdot\mc{B})=\det(\mc{A})^{(k-1)^{(n-1)}}\det(\mc{B})^{(m-1)^{n}}.\]
\end{theorem}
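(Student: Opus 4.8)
The plan is to read the tensor determinant as a resultant and to exploit the fact that Shao's general product corresponds, at the level of the associated polynomial maps, to \emph{composition}. Throughout set $d:=m-1$ and $e:=k-1$, and recall from Definition \ref{defndet} that for any $\mc{C}\in\mb{R}^{[p,n]}$ the defining polynomials $F_1^{\mc{C}},\ldots,F_n^{\mc{C}}$ are exactly the components of $\mc{C}x^{p-1}$, so that $\det(\mc{C})=\mathrm{Res}(\mc{C}x^{p-1})$, the resultant of that system of $n$ forms in $n$ variables. The conceptual heart is the identity
\[(\mc{A}\cdot\mc{B})\,x^{(m-1)(k-1)}=\mc{A}\big(\mc{B}x^{k-1}\big)^{m-1},\]
which I would prove by a direct expansion: substituting the definition of the general product into the left-hand side, the $i_1$-th component becomes $\sum_{j_2,\ldots,j_m}a_{i_1 j_2\ldots j_m}\prod_{r=2}^{m}\big(\sum_{\alpha}b_{j_r\alpha}x^{\alpha}\big)$, and each inner sum $\sum_{\alpha}b_{j_r\alpha}x^{\alpha}$ is precisely the $j_r$-th component of $\mc{B}x^{k-1}$. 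Thus the degree-$de$ map attached to $\mc{A}\cdot\mc{B}$ is the composite of the degree-$d$ map of $\mc{A}$ with the degree-$e$ map of $\mc{B}$.

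Granting this, Theorem \ref{thmdet2} reduces to a composition formula for resultants: for systems $\mathbf{F}$ and $\mathbf{G}$ of $n$ homogeneous forms in $n$ variables, of degrees $d$ and $e$ respectively, with $\mathbf{F}\circ\mathbf{G}$ denoting componentwise substitution,
\[\mathrm{Res}(\mathbf{F}\circ\mathbf{G})=\mathrm{Res}(\mathbf{F})^{\,e^{n-1}}\,\mathrm{Res}(\mathbf{G})^{\,d^{n}}.\]
Indeed, taking $\mathbf{F}=\mc{A}x^{m-1}$ and $\mathbf{G}=\mc{B}x^{k-1}$, the identity above gives $\mathbf{F}\circ\mathbf{G}=(\mc{A}\cdot\mc{B})x^{(m-1)(k-1)}$, and substituting the exponents $e^{n-1}=(k-1)^{n-1}$ and $d^{n}=(m-1)^{n}$ yields exactly $\det(\mc{A}\cdot\mc{B})=\det(\mc{A})^{(k-1)^{n-1}}\det(\mc{B})^{(m-1)^{n}}$.

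To prove the composition formula I would treat the coefficients of $\mathbf{F}$ and $\mathbf{G}$ as indeterminates over $\overline{\mb{Q}}$; since both sides are polynomials in the entries of $\mc{A}$ and $\mc{B}$, the generic identity specializes to the one needed. The argument has four parts. \textbf{(a)} Identify the vanishing locus: $\mathrm{Res}(\mathbf{F}\circ\mathbf{G})=0$ iff $\mathbf{F}\circ\mathbf{G}$ has a nontrivial common zero, and this occurs iff $\mathrm{Res}(\mathbf{F})=0$ or $\mathrm{Res}(\mathbf{G})=0$. If $\mathbf{G}$ has a common zero $\zeta$ then $\mathbf{F}(\mathbf{G}(\zeta))=\mathbf{F}(\mathbf{0})=\mathbf{0}$; if only $\mathbf{F}$ has one, then $\mathbf{G}$ is base-point-free, hence a surjective morphism of $\mb{P}^{n-1}$, so that zero lies in its image; and if neither resultant vanishes then $\mathbf{F}$ and $\mathbf{G}$ both define morphisms of $\mb{P}^{n-1}$, so does their composite, which therefore has no common zero. \textbf{(b)} Since the multivariate resultant is irreducible as a polynomial in the coefficients of its arguments, the Nullstellensatz forces $\mathrm{Res}(\mathbf{F}\circ\mathbf{G})=c\,\mathrm{Res}(\mathbf{F})^{a}\,\mathrm{Res}(\mathbf{G})^{b}$ for some constant $c\neq0$ and integers $a,b\geq1$. \textbf{(c)} Fix the exponents by homogeneity: the resultant of $n$ forms of degree $\delta$ is homogeneous of degree $\delta^{n-1}$ in the coefficients of each form, so comparing the degree in the coefficients of $\mathbf{F}$, and (using that scaling all coefficients of $\mathbf{G}$ by $\lambda$ multiplies $\mathbf{F}\circ\mathbf{G}$ by $\lambda^{d}$, since $\mathbf{F}$ is homogeneous of degree $d$) comparing the degree in the coefficients of $\mathbf{G}$, forces $a=e^{n-1}$ and $b=d^{n}$. \textbf{(d)} Pin down $c$ by evaluating at the diagonal systems $\mathbf{F}=(y_1^{d},\ldots,y_n^{d})$ and $\mathbf{G}=(x_1^{e},\ldots,x_n^{e})$, where $\mathbf{F}\circ\mathbf{G}=(x_1^{de},\ldots,x_n^{de})$ and all three resultants equal $1$ under the standard normalization, giving $c=1$.

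The composition identity of the first paragraph is a routine, if index-heavy, expansion, and the reduction in the second paragraph is then purely formal. The real work lies in the composition formula for resultants, and within it I expect two points to be delicate: the vanishing-locus analysis in \textbf{(a)} — in particular the implication $\mathrm{Res}(\mathbf{F})=0\Rightarrow\mathrm{Res}(\mathbf{F}\circ\mathbf{G})=0$, which relies on a base-point-free endomorphism of $\mb{P}^{n-1}$ being surjective — and the degree bookkeeping in \textbf{(c)} that extracts the precise exponents. I expect \textbf{(c)} to be the main obstacle, since any slip in the scaling count corrupts exactly the exponents $(k-1)^{n-1}$ and $(m-1)^{n}$ that the theorem asserts.
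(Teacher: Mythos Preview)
The paper does not prove this theorem: it is quoted verbatim from Shao--Shan--Zhang \cite[Theorem 2.1]{shao2013a} as a preliminary black box used later in the proof of Theorem~\ref{detgcd}. So there is no in-paper argument to compare your proposal against.

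That said, your plan is the natural one and tracks the original source closely. The identity $(\mc{A}\cdot\mc{B})x^{(m-1)(k-1)}=\mc{A}(\mc{B}x^{k-1})^{m-1}$ is exactly Shao's defining property of the general product (\cite[Theorem~2.1 and eq.~(2.1)]{shao2013}), and it reduces the tensor-determinant statement to a pure resultant identity $\mathrm{Res}(\mathbf{F}\circ\mathbf{G})=\mathrm{Res}(\mathbf{F})^{e^{n-1}}\mathrm{Res}(\mathbf{G})^{d^{n}}$. Your four-step proof of the latter --- matching vanishing loci, invoking irreducibility of the resultant plus the Nullstellensatz to get the shape $c\,\mathrm{Res}(\mathbf{F})^{a}\mathrm{Res}(\mathbf{G})^{b}$, reading off $a$ and $b$ by homogeneity/scaling, and normalizing $c$ on diagonal systems --- is correct. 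The degree bookkeeping you flag in \textbf{(c)} does work out: linearity of $\mathbf{F}\circ\mathbf{G}$ in the coefficients of $\mathbf{F}$ gives $(de)^{n-1}=a\,d^{n-1}$, hence $a=e^{n-1}$; and scaling $\mathbf{G}\mapsto\lambda\mathbf{G}$ scales $\mathbf{F}\circ\mathbf{G}$ by $\lambda^{d}$, giving $d\cdot n(de)^{n-1}=b\cdot n e^{n-1}$, hence $b=d^{n}$. The surjectivity in \textbf{(a)} follows because a base-point-free system of $n$ forms on $\mb{P}^{n-1}$ defines a finite, hence surjective, morphism. Nothing is missing.
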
 

\begin{lemma}\cite[Lemma 2.1]{shao2013}\label{lemdet2}
Let $\mc{A}\in\mb{R}^{[m,n]}$ and $Q_1, Q_2 \in \mathbb{R}^{n \times n}$. Then
\[\det (Q_1\cdot \mc{A} \cdot Q_2)= \det (Q_1\cdot \mc{I} \cdot Q_2) \det (\mc{A}),\]
where $\mc{I}\in\mb{R}^{[m,n]}$ is the identity tensor. 
\end{lemma}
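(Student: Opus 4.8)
The plan is to obtain this identity as a twofold application of the Shao--Shan--Zhang formula (Theorem \ref{thmdet2}) combined with the associativity of the general product. The point to exploit is that $Q_1$ and $Q_2$ are matrices, i.e.\ order-$2$ tensors, so that in each application of Theorem \ref{thmdet2} one of the two exponents collapses to $1$. Before invoking it I would settle the order bookkeeping: since $Q_1,Q_2\in\mb{R}^{[2,n]}$ and $\mc{A}\in\mb{R}^{[m,n]}$, the product $Q_1\cdot\mc{A}$ has order $(2-1)(m-1)+1=m$ and $(Q_1\cdot\mc{A})\cdot Q_2$ again has order $m$, so by associativity this common order-$m$ tensor is exactly $Q_1\cdot\mc{A}\cdot Q_2$ and $\det$ of it is well defined.

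First I would apply Theorem \ref{thmdet2} to the outer product $(Q_1\cdot\mc{A})\cdot Q_2$, with $Q_1\cdot\mc{A}$ (order $m$) in the role of the first factor and $Q_2$ (order $k=2$) in the role of the second. Since $(k-1)^{(n-1)}=1^{\,n-1}=1$, this yields $\det\bigl((Q_1\cdot\mc{A})\cdot Q_2\bigr)=\det(Q_1\cdot\mc{A})\,\det(Q_2)^{(m-1)^{n}}$. Then I would apply Theorem \ref{thmdet2} a second time to $Q_1\cdot\mc{A}$, now with $Q_1$ (order $2$) first and $\mc{A}$ (order $m$) second; here the second exponent is $(2-1)^{n}=1$, giving $\det(Q_1\cdot\mc{A})=\det(Q_1)^{(m-1)^{(n-1)}}\det(\mc{A})$. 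Chaining the two identities produces $\det(Q_1\cdot\mc{A}\cdot Q_2)=\det(Q_1)^{(m-1)^{(n-1)}}\,\det(\mc{A})\,\det(Q_2)^{(m-1)^{n}}$.

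Finally I would specialize to $\mc{A}=\mc{I}$: since $\det(\mc{I})=1$ for the identity tensor, the identical computation gives $\det(Q_1\cdot\mc{I}\cdot Q_2)=\det(Q_1)^{(m-1)^{(n-1)}}\det(Q_2)^{(m-1)^{n}}$, which is exactly the factor multiplying $\det(\mc{A})$ above; comparing the two expressions yields the claim. I would emphasize that no invertibility of $Q_1$ or $Q_2$ is required, because both sides of the target identity are literally the same product of prime-power factors and no division ever occurs. The only place demanding care is the index bookkeeping in Theorem \ref{thmdet2}---correctly assigning which factor plays the role of the first ($m$) and which the second ($k$) tensor, so that the appropriate exponent collapses in each of the two applications; the rest is substitution. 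If one wishes to keep the argument logically independent of Theorem \ref{thmdet2}, I would instead use Shao's interpretation of the general product as composition of the polynomial maps $x\mapsto Q_i x$ and $x\mapsto\mc{A}x^{m-1}$, together with the classical transformation law of the resultant under a linear change of variables, which reproduces the same exponents directly from Definition \ref{defndet}.
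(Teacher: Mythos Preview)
The paper does not give its own proof of this lemma; it is quoted from \cite{shao2013} as a preliminary fact, with no argument supplied. Your derivation---two applications of Theorem~\ref{thmdet2}, exploiting that one factor in each step has order $2$ so one exponent collapses to $1$, followed by specialization to $\mc{A}=\mc{I}$ with $\det(\mc{I})=1$---is correct. Indeed, the very same two-step expansion is carried out explicitly in the paper's proof of Theorem~\ref{detgcd}, where $\det(E\cdot\mc{D}\cdot E^{T})$ is unpacked via Theorem~\ref{thmdet2} first on the right and then on the left, so your argument is fully in the spirit of how the paper uses these tools.

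One caveat worth flagging: Lemma~\ref{lemdet2} and Theorem~\ref{thmdet2} are imported from two \emph{different} external papers (\cite{shao2013} and \cite{shao2013a} respectively), and in the original literature the logical dependency may well run the other way---Shao's lemma on $\det(Q_1\cdot\mc{A}\cdot Q_2)$ is the more primitive statement, proved directly from the transformation behaviour of the resultant, and product formulas like Theorem~\ref{thmdet2} are typically built on top of it. Your closing remark about deriving the exponents straight from Definition~\ref{defndet} via the linear change of variables in the resultant is therefore closer to how the cited source actually argues, and is the safer route if one cares about avoiding circularity in the broader literature. Within the present paper, however, both results are taken as given, so your deduction is entirely legitimate.
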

The next lemma gives a formula to compute the determinant of a diagonal tensor.
\begin{lemma}(\cite[Proposition 5.1]{hu2013}, \cite[Example 3.3]{shao2013a})\label{lemdet3}
Let $\mc{A}$ be the $m$th-order $n$-dimensional diagonal tensor with the $i$th diagonal element of $\mc{A}$ is $a_{ii\ldots i} = d_{i}$  for all $i = 1,{2,}\ldots,n$. Then \[\det(\mc{A})=\prod_{i=1}^{n}(d_{i})^{(m-1)^{(n-1)}}.\]
\end{lemma}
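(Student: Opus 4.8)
The plan is to compute $\det(\mc{A})$ directly from Definition \ref{defndet}, exploiting the fact that a diagonal tensor produces a completely decoupled system of defining polynomials. First I would write down the forms $F_1,\ldots,F_n$ attached to $\mc{A}$. Since $a_{i_1 i_2\ldots i_m}=d_{i_1}$ when $i_1=i_2=\cdots=i_m$ and vanishes otherwise, every mixed monomial drops out and one is left with
\[
F_k(x_1,\ldots,x_n)=\sum_{i_2,\ldots,i_m=1}^{n}a_{k i_2\ldots i_m}\,x_{i_2}\cdots x_{i_m}=d_k\, x_k^{\,m-1},\qquad 1\le k\le n.
\]
Thus $\det(\mc{A})$ is the resultant $\mathrm{Res}(d_1 x_1^{m-1},\ldots,d_n x_n^{m-1})$ of $n$ forms of common degree $m-1$ in $n$ variables.

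Next I would peel off the scalars $d_k$ using the multi-homogeneity of the resultant. Recall that $\mathrm{Res}(F_1,\ldots,F_n)$, viewed as a polynomial in the coefficients of a single form $F_k$ with the others held fixed, is homogeneous of degree $\prod_{j\ne k}\deg(F_j)$. Here every $\deg(F_j)=m-1$, so this degree is $(m-1)^{n-1}$, and scaling $F_k$ by the constant $d_k$ multiplies the resultant by $d_k^{(m-1)^{n-1}}$. Carrying this out for each $k=1,\ldots,n$ gives
\[
\det(\mc{A})=\Big(\prod_{k=1}^{n}d_k^{(m-1)^{n-1}}\Big)\,\mathrm{Res}\big(x_1^{m-1},\ldots,x_n^{m-1}\big).
\]
It then remains to evaluate the resultant of the pure power system $x_1^{m-1},\ldots,x_n^{m-1}$. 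But these are exactly the defining forms $F_k=x_k^{m-1}$ of the identity tensor $\mc{I}\in\mb{R}^{[m,n]}$, so by Definition \ref{defndet} this resultant equals $\det(\mc{I})=1$; equivalently, it is the standard normalization of the resultant on the diagonal (monomial) system. Substituting yields $\det(\mc{A})=\prod_{k=1}^{n}d_k^{(m-1)^{(n-1)}}$, as claimed.

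The main obstacle is not the algebra but the bookkeeping around the resultant's formal properties, which must be invoked rather than rederived: namely (a) the degree $(m-1)^{n-1}$ of multi-homogeneity in the coefficients of each $F_k$, and (b) the normalization $\mathrm{Res}(x_1^{m-1},\ldots,x_n^{m-1})=1$. Both are standard (see \cite{hu2013,shao2013a} and the classical theory of resultants); the only point requiring genuine care is to confirm that the scaling exponent coming from (a) matches the target exponent $(m-1)^{(n-1)}$ in the statement, which it does precisely because all $n$ forms share the common degree $m-1$. Should one prefer to avoid citing (b) directly, it suffices to feed in the base value $\det(\mc{I})=1$ for the identity tensor, which is the same computation run on $d_1=\cdots=d_n=1$.
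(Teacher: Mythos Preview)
The paper does not supply its own proof of this lemma; it is quoted from \cite{hu2013} and \cite{shao2013a} and used as a black box. Your argument is correct and is the natural way to derive the formula directly from Definition~\ref{defndet}: the diagonal structure forces $F_k=d_k x_k^{m-1}$, the multi-homogeneity of the resultant in each block of coefficients (degree $(m-1)^{n-1}$, since all $n$ forms share degree $m-1$) extracts the factor $\prod_k d_k^{(m-1)^{n-1}}$, and the normalization $\mathrm{Res}(x_1^{m-1},\ldots,x_n^{m-1})=1$ finishes it. One small remark: your proposed fallback of appealing to $\det(\mc{I})=1$ in place of the normalization is not really an independent route, since $\det(\mc{I})=1$ \emph{is} the normalization of the resultant restated in tensor language; the two options you offer are the same fact in different clothing, so there is no circularity but also no genuine alternative.
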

The next lemma gives a relationship between the $k$-mode product of a tensor $\mc{A}$ with a matrix $Q$ and the general product of $\mc{A}$ with $Q$. 
\begin{lemma}\cite[Remark 2.3]{shao2013}\label{lemdet1}
	Let $\mc{A}\in\mb{R}^{[m,n]}$ and $Q\in\mb{R}^{n \times n}$. Then
	\[\mc{A}\times_{1}Q\times_{2} Q\times_{3}\cdots\times_{m} Q= Q\cdot\mc{A}\cdot Q^{T}.\]
\end{lemma}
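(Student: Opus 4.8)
The plan is to prove the identity entrywise: I will expand both the iterated $k$-mode product on the left and the general product $Q\cdot\mc{A}\cdot Q^{T}$ on the right into explicit multi-sums over the entries $a_{i_{1}\ldots i_{m}}$ of $\mc{A}$ and $q_{ij}$ of $Q$, and then check that the two expressions coincide at every index $(i_{1},\ldots,i_{m})$.

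First I would unroll the left-hand side. The product $\times_{k}Q$ acts only on the $k$th slot, so distinct modes touch disjoint indices and the successive sums factor cleanly. Applying the defining formula of $\times_{k}$ once for each $k=1,2,\ldots,m$ and tracking the fresh summation variable introduced at each step, a short induction on the number of modes gives
\[(\mc{A}\times_{1}Q\times_{2}Q\times_{3}\cdots\times_{m}Q)_{i_{1}i_{2}\ldots i_{m}}=\sum_{j_{1},j_{2},\ldots,j_{m}=1}^{n}q_{i_{1}j_{1}}q_{i_{2}j_{2}}\cdots q_{i_{m}j_{m}}\,a_{j_{1}j_{2}\ldots j_{m}}.\]
The inductive step uses only that $(\mc{B}\times_{k}Q)_{\ldots i_{k}\ldots}=\sum_{j}q_{i_{k}j}\,\mc{B}_{\ldots j\ldots}$ replaces the $k$th index while leaving the others untouched, so each new factor $q_{i_{k}j_{k}}$ attaches to an independent index.

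Next I would expand the right-hand side. Since the general product is associative, $Q\cdot\mc{A}\cdot Q^{T}$ is unambiguous, and I would read it as $(Q\cdot\mc{A})\cdot Q^{T}$. Because $Q$ has order $2$, the general product definition specializes so that $Q\cdot\mc{A}$ is again an $m$th-order tensor with $(Q\cdot\mc{A})_{j_{1}j_{2}\ldots j_{m}}=\sum_{p}q_{j_{1}p}\,a_{p j_{2}\ldots j_{m}}$; that is, $Q$ is contracted into the first mode only. Forming the general product with $Q^{T}$ on the right, and noting that for a matrix second factor the multi-indices $\alpha_{1},\ldots,\alpha_{m-1}$ collapse to single indices $i_{2},\ldots,i_{m}$, the definition gives
\[(Q\cdot\mc{A}\cdot Q^{T})_{i_{1}i_{2}\ldots i_{m}}=\sum_{j_{2},\ldots,j_{m}=1}^{n}(Q\cdot\mc{A})_{i_{1}j_{2}\ldots j_{m}}\,(Q^{T})_{j_{2}i_{2}}\cdots(Q^{T})_{j_{m}i_{m}}.\]
The key bookkeeping step is the transpose: $(Q^{T})_{j_{k}i_{k}}=q_{i_{k}j_{k}}$, which is exactly what is needed so that the $Q$-factors attach to the output indices $i_{2},\ldots,i_{m}$ rather than to the summation indices. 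Substituting the formula for $Q\cdot\mc{A}$ reintroduces the summation over $j_{1}$ and produces the same multi-sum as on the left.

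The computation carries no genuine analytic difficulty; the only thing to get right is the index bookkeeping, and I expect this to be the main (and only) obstacle. Concretely, the two places that demand care are (i) verifying that the general product of a matrix with a tensor contracts into a single mode and does not otherwise permute indices, so that $Q\cdot\mc{A}$ and $\mc{A}\cdot Q^{T}$ really produce $m$th-order tensors with $Q$ multiplied into the first and into the last $m-1$ modes respectively, and (ii) the transpose on the right factor, which is what converts the general-product contraction (summing over the second index of $Q^{T}$) into the $k$-mode contraction on the left (summing over the second index of $Q$). Matching the summation variables across the two expansions then finishes the proof.
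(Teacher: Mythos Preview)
Your entrywise verification is correct: the expansion of the iterated $k$-mode product and of $Q\cdot\mc{A}\cdot Q^{T}$ via the general-product definition both reduce to the common multi-sum $\sum_{j_{1},\ldots,j_{m}} q_{i_{1}j_{1}}\cdots q_{i_{m}j_{m}}\,a_{j_{1}\ldots j_{m}}$, and the transpose bookkeeping you flag is exactly the point that makes the indices line up. The paper does not supply its own proof of this lemma---it is quoted from Shao's work \cite[Remark~2.3]{shao2013}---so there is nothing to compare against; your direct computation is the standard argument and would serve perfectly well as a proof here.
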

Let $\mc{A}, \mc{B}\in\mb{R}^{[m,n]}$. Then $\mc{A}$ is congruent to $\mc{B}$ if there exists $Q\in\mb{R}^{n \times n}$ such that $\mc{B}=Q\cdot\mc{A}\cdot Q^{T}$.  If $Q$ is a permutation matrix, then we say $\mc{A}$ is permutationally congruent to $\mc{B}$.
\begin{theorem}\label{thmsmlr}
	Let $S=\{s_{1},{s_{2},}\ldots,s_{n}\}$ be an ordered set of distinct positive integers and let $S'=\{s_{i_{1}},{s_{i_{2}},}\ldots,s_{i_{n}}\}$ be a rearrangement of the elements of $S$. Let $\mc{T}_{[S]}$ and $\mc{T}_{[S']}$ be the $m$th-order $n$-dimensional GCD tensors on $S$ and $S'$ respectively. Then $\mc{T}_{[S']}$ is permutationally congruent to $\mc{T}_{[S]}.$ Moreover, $\det{(\mc{T}_{[S']})}=\det{(\mc{T}_{[S]})}.$
\end{theorem}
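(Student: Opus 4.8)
The plan is to exhibit an explicit permutation matrix realizing the congruence and then read off the determinant from Lemmas \ref{lemdet1} and \ref{lemdet2}. Let $\sigma$ be the permutation of $\{1,\ldots,n\}$ determined by the rearrangement, say $\sigma(j)=i_j$, so that the $j$th entry of $S'$ is $s'_j=s_{\sigma(j)}$, and let $P=(p_{ab})\in\mb{R}^{n\times n}$ be the associated permutation matrix, $p_{ab}=1$ if $b=\sigma(a)$ and $0$ otherwise. The first step is a routine unwinding of the $k$-mode product: since each row of $P$ has a single nonzero entry, applying $P$ in every mode gives $(\mc{A}\times_{1}P\times_{2}P\times_{3}\cdots\times_{m}P)_{j_1\cdots j_m}=\mc{A}_{\sigma(j_1)\cdots\sigma(j_m)}$ for any $\mc{A}\in\mb{R}^{[m,n]}$. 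By Lemma \ref{lemdet1}, this tensor is exactly $P\cdot\mc{A}\cdot P^{T}$.

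Applying this with $\mc{A}=\mc{T}_{[S]}$ yields $(P\cdot\mc{T}_{[S]}\cdot P^{T})_{j_1\cdots j_m}=\GCD(s_{\sigma(j_1)},\ldots,s_{\sigma(j_m)})=\GCD(s'_{j_1},\ldots,s'_{j_m})=(\mc{T}_{[S']})_{j_1\cdots j_m}$, so $\mc{T}_{[S']}=P\cdot\mc{T}_{[S]}\cdot P^{T}$; that is, $\mc{T}_{[S']}$ is permutationally congruent to $\mc{T}_{[S]}$. For the determinant, I would invoke Lemma \ref{lemdet2} with $Q_1=P$ and $Q_2=P^{T}$ to get $\det(\mc{T}_{[S']})=\det(P\cdot\mc{I}\cdot P^{T})\,\det(\mc{T}_{[S]})$, where $\mc{I}$ is the identity tensor. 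The same index computation applied to $\mc{I}$ shows $(P\cdot\mc{I}\cdot P^{T})_{j_1\cdots j_m}=\mc{I}_{\sigma(j_1)\cdots\sigma(j_m)}$, which equals $1$ precisely when $\sigma(j_1)=\cdots=\sigma(j_m)$, hence (as $\sigma$ is a bijection) precisely when $j_1=\cdots=j_m$; thus $P\cdot\mc{I}\cdot P^{T}=\mc{I}$ and $\det(P\cdot\mc{I}\cdot P^{T})=\det(\mc{I})=1$, giving $\det(\mc{T}_{[S']})=\det(\mc{T}_{[S]})$.

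I expect the only delicate point to be convention-tracking: one must fix the direction of $\sigma$ (whether it sends new indices to old or old to new) so that the product $P\cdot\mc{T}_{[S]}\cdot P^{T}$ genuinely lands on $\mc{T}_{[S']}$ rather than on the GCD tensor of the inverse rearrangement. Since $P^{-1}=P^{T}$ is again a permutation matrix, the choice does not affect the final congruence or determinant statement, but it does need to be pinned down for a clean write-up. Beyond that, the proof is essentially the $k$-mode-product identity ``applying a permutation matrix in all modes permutes all indices'' followed by quoting Lemmas \ref{lemdet1} and \ref{lemdet2} and the normalization $\det(\mc{I})=1$, so there is no substantive obstacle.
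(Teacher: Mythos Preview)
Your proposal is correct and matches the paper's proof essentially line for line: the paper defines the same permutation matrix $P$ (with $p_{ij}=1\Leftrightarrow j=\sigma(i)$), performs the same index computation to get $\mc{T}_{[S']}=\mc{T}_{[S]}\times_{1}P\cdots\times_{m}P=P\cdot\mc{T}_{[S]}\cdot P^{T}$ via Lemma~\ref{lemdet1}, and then applies Lemma~\ref{lemdet2} together with $P\cdot\mc{I}\cdot P^{T}=\mc{I}$ to conclude $\det(\mc{T}_{[S']})=\det(\mc{T}_{[S]})$. Your remark about pinning down the direction of $\sigma$ is apt but, as you note, immaterial to the result.
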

\begin{proof}
	Let $\sigma$ be the permutation on the set $\{1,2,\ldots,n\}$ such that $S^\prime=\{s_{\sigma({1})},{s_{\sigma(2)},}\ldots,s_{\sigma({n})}\}$  and let $P=P_{\sigma}\in\mb{R}^{n\times n}$ be the corresponding  permutation matrix. We claim that $\mc{T}_{[S']}=P\cdot \mc{T}_{[S]} \cdot P^T$. Let $t_{j_{1}{j_{2}}\ldots j_{m}}:=\GCD(s_{j_{1}},s_{j_{2}},\ldots,s_{j_{m}})$ for all $1\leq j_1,{j_{2},}\ldots,j_m\leq n$. Now notice that for all $1\leq i_1,{i_{2},}\ldots,i_m\leq n$,
	\begin{eqnarray*}
		(\mc{T}_{[S]}\times_{1}P\times_{2}P\times_{3}\cdots\times_{m}P)_{i_{1}\ldots i_{m}}&=&\sum_{j_{1},{j_{2},}\ldots,j_{m}=1}^{n}t_{j_{1}{j_{2}}\ldots j_{m}}p
		_{i_{1}j_{1}}{\cdot p
		_{i_{2}j_{2}}}\cdots p_{i_{m}j_{m}}\\
		&=& t_{\sigma(i_{1}){\sigma(i_{2})}\ldots \sigma(i_{m})} ~(\because p_{ij}=1\Leftrightarrow j=\sigma(i))\\
		&=&\GCD(s_{\sigma(i_{1})},{s_{\sigma(i_{2})},}\ldots,s_{\sigma(i_{m})})\\
		&=&(\mc{T}_{[S']})_{i_{1}{i_{2}}\ldots i_{m}}.
	\end{eqnarray*}
	Thus, $\mc{T}_{[S']}=\mc{T}_{[S]}\times_{1}P\times_{2}P\times_{3}\cdots\times_{m}P$. By Lemma \ref{lemdet1}, $\mc{T}_{[S']}= P\cdot \mc{T}_{[S]} \cdot P^T$.
	
	We next prove the second part of the assertion. By Lemma \ref{lemdet2}, 
	\[\det(\mc{T}_{[S']})=\det(P\cdot \mc{I}\cdot P^{T})\det(\mc{T}_{[S]}).\]
	
	Since $P$ is a permutation matrix, $P\cdot \mc{I}\cdot P^{T}=\mc{I}$. Thus $\det(P\cdot \mc{I}\cdot P^{T})=\det(\mc{I})=1$ and $\det(\mc{T}_{[S']})=\det(\mc{T}_{[S]})$.
\end{proof}

The next lemma gives an important factorization of the GCD tensor, which is crucial to obtain the determinant.

\begin{lemma}\label{factodet}
	Let $S=\{s_{1},s_{2},\ldots,s_{n}\}$ be a set of $n$ distinct positive integers, and $F=\{f_{1},{f_{2},}\ldots,f_{l}\}$ be a factor-closed set containing $S$. Then the $m$th-order $n$-dimensional GCD tensor $\mc{T}_{[S]}$ can be expressed as \begin{equation}\label{deteq}
	\mc{T}_{[S]}=\mc{D}\times_{1}E\times_{2}E\times_{3}\cdots\times_{m}E,
\end{equation} where $\mc{D}\in \mb{R}^{[m,l]}$ is a diagonal tensor with diagonal entries $d_{ii\ldots i}=\Phi(f_{i}),$ $1\leq i\leq l,$ and $E=(e_{ij})\in\mb{R}^{n\times l}$ such that $e_{ij}=1$ if $f_{j}$ divides $s_{i},$ otherwise $e_{ij}=0.$
\end{lemma}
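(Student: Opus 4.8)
The plan is to verify the factorization \eqref{deteq} entrywise, exactly as in the proof of Theorem \ref{thm:gcdcpt}, and then observe that it is simply a restatement of the decomposition \eqref{eqn:mre} in the language of the $k$-mode product. First I would fix indices $1\leq i_1,\ldots,i_m\leq n$ and expand the right-hand side directly from the definition of the $k$-mode product:
\[
(\mc{D}\times_{1}E\times_{2}E\times_{3}\cdots\times_{m}E)_{i_1\ldots i_m}
=\sum_{j_1,\ldots,j_m=1}^{l} d_{j_1\ldots j_m}\, e_{i_1 j_1}\, e_{i_2 j_2}\cdots e_{i_m j_m}.
\]
Since $\mc{D}$ is diagonal, $d_{j_1\ldots j_m}=0$ unless $j_1=j_2=\cdots=j_m=:k$, in which case $d_{kk\ldots k}=\Phi(f_k)$. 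Hence the multiple sum collapses to $\sum_{k=1}^{l}\Phi(f_k)\,e_{i_1 k}e_{i_2 k}\cdots e_{i_m k}$, which is precisely the $(i_1,\ldots,i_m)$th entry of $\sum_{k=1}^{l}\Phi(f_k)(E_k)^{\otimes m}$.

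Next I would invoke the computation already carried out in the proof of Theorem \ref{thm:gcdcpt}: the product $e_{i_1 k}\cdots e_{i_m k}$ equals $1$ when $f_k \mid s_{i_1},\ldots,f_k\mid s_{i_m}$ and $0$ otherwise, so
\[
\sum_{k=1}^{l}\Phi(f_k)\,e_{i_1 k}\cdots e_{i_m k}
=\sum_{f_k\mid \GCD(s_{i_1},\ldots,s_{i_m})}\Phi(f_k)
=\GCD(s_{i_1},\ldots,s_{i_m}),
\]
where the last equality is Lemma \ref{gauss} (Gauss's identity $\sum_{d\mid k}\Phi(d)=k$), using that $F$ is factor-closed so every divisor of $\GCD(s_{i_1},\ldots,s_{i_m})$ lies in $F$. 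This is the $(i_1,\ldots,i_m)$th entry of $\mc{T}_{[S]}$, establishing \eqref{deteq}. Since the two indices $i_1,\ldots,i_m$ were arbitrary, the tensor identity follows.

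There is essentially no obstacle here: the lemma is a bookkeeping reformulation, and the only substantive input — the totient sum over divisors and the factor-closed hypothesis guaranteeing all those divisors sit in $F$ — was already dispatched in Theorem \ref{thm:gcdcpt}. The one point that deserves a sentence of care is the reduction of the four-fold index sum via diagonality of $\mc{D}$; alternatively, one can skip even that by citing Lemma \ref{lemdet1} (or rather the identity $(E_k)^{\otimes m}$ equals the $k$-mode product of the rank-one diagonal tensor $\Phi(f_k)\,(e_k)^{\otimes m}$ by $E$ in each mode) to pass directly between \eqref{eqn:mre} and \eqref{deteq}. I would present the entrywise verification as the cleanest route, and then in Section \ref{sec:det} combine \eqref{deteq} with Lemmas \ref{lemdet1}, \ref{lemdet2}, \ref{lemdet3} and Theorem \ref{thmdet2} to extract $\det(\mc{T}_{[S]})$.
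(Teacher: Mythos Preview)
Your proposal is correct and follows essentially the same route as the paper: an entrywise expansion of $\mc{D}\times_{1}E\times_{2}\cdots\times_{m}E$, collapse of the sum via diagonality of $\mc{D}$, and Gauss's identity (Lemma~\ref{gauss}) to recover $\GCD(s_{i_1},\ldots,s_{i_m})$. The only cosmetic difference is that you route through the decomposition \eqref{eqn:mre} from Theorem~\ref{thm:gcdcpt}, whereas the paper repeats the divisor-sum computation in place; the arguments are otherwise identical.
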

\begin{proof}
	To prove the identity \eqref{deteq} it suffices to show that for all $1\leq i_1,{i_2,}\ldots,i_m\leq n$, the $(i_{1},{i_{2},}\ldots, i_{m})$-th element of $\mc{D}\times_{1}E\times_{2}E\times_{3}\cdots\times_{m}E$ equals the $(i_{1},{i_{2},}\ldots, i_{m})$-th element of $\mc{T}_{[S]}$.  Now
	\begin{eqnarray*}
(\mc{D}\times_{1}E\times_{2}E\times_{3}\cdots\times_{m}E)_{i_{1}{i_{2}}\ldots i_{m}}
		&=&\sum_{j_{1},{j_{2},}\ldots, j_{m}=1}^{l}d_{j_{1}{j_{2}}\ldots j_{m}}e_{i_{1}j_{1}}e_{i_{2}j_{2}}\ldots e_{i_{m}j_{m}}\\
		&=&\sum_{f_{k}\mid s_{i_{1}},f_{k}\mid s_{i_{2}},\ldots,f_{k}\mid s_{i_{m}}}\Phi(f_{k})\\
		&=&\sum_{f_{k}\mid \GCD (s_{i_{1}},s_{i_{2}},\ldots, s_{i_{m}})}\Phi(f_{k})\\
		&=&\GCD (s_{i_{1}},s_{i_{2}},\ldots, s_{i_{m}})\\
		&=&(\mc{T}_{[S]})_{i_{1}i_{2}\ldots i_{m}}.
	\end{eqnarray*}
	Thus $\mc{T}_{[S]}=\mc{D}\times_{1}E\times_{2}E\times_{3}\cdots\times_{m}E$.
\end{proof}
Based on Lemma \ref{factodet}, we now provide an algorithm to factorize the GCD tensor as in \eqref{deteq}.
\begin{algorithm}[H]
\scriptsize
\caption{Factorization of GCD tensor}
\begin{algorithmic}[1]
\State Enter a set of $n$ distinct positive integers $S=\{s_{1},{s_{2},}\ldots,s_{n}\}$.
\State Construct a factor-closed set $F=\{f_{1},{f_{2},}\ldots,f_{l}\}$ containing $S.$
\State Construct an $n\times l$ zero matrix $E=(e_{ij})$.
\For {$i=1:n$}
\For {$j=1:l$}
\If{$f_{j}|s_{i}$}
\State  $e_{ij}=1$;
\Else
\State $e_{ij}=0$;
\EndIf
\EndFor
\EndFor
\State Construct an $m$th-order $n$-dimensional zero tensor $\mc{D}$;
\For {$i = 1:n$} 
\State ${d_{ii\ldots i}}=\Phi(f_{i})$;
\EndFor
\State Compute $\mc{T}_{[S]}=\mc{D}\times_{1}E\times_{2}\cdots\times_{m}E.$
\State \Return $\mc{D},E$ and $\mc{T}_{[S]}.$
        \end{algorithmic}
    \end{algorithm}
With these ingredients in hand, we are now ready to prove our final main result.
\begin{proof}[Proof of Theorem \ref{detgcd}]
	Using Theorem \ref{thmsmlr}, without loss of generality we may assume $S=\{s_{1},s_{2},\ldots,s_{n}\}$ with $s_{1}<s_{2}<\cdots<s_{n}$. By Lemma \ref{factodet}, there exist a diagonal tensor $\mc{D}\in \mb{R}^{[m,n]}$ with diagonal entries with $d_{ii\ldots i}=\Phi(s_{i}),$ $1\leq i\leq n,$ and a square matrix $E=(e_{ij})\in\mb{R}^{n\times n}$ such that
	
	\[\mc{T}_{[S]}=\mc{D}\times_{1}E\times_{2}E\times_{3}\cdots\times_{m}E,\]
	where $e_{ij}=1$ if $s_{j}$ divides $s_{i},$ otherwise $e_{ij}=0.$ 
Since $s_{1}<s_{2}<\cdots<s_{n}$, $E$ is a lower triangular matrix with diagonal entries are $1$. Thus $\det (E)=1$.  By Lemma \ref{lemdet1}, $\mc{T}_{[S]}=E\cdot\mc{D}\cdot E^T$, where `$\cdot$' denotes the general product of tensors.  Since the general product of tensors is associative, by Theorem \ref{thmdet2}, we have
	\begin{eqnarray*}
		\det(\mc{T}_{[S]})&=&\det(E\cdot\mc{D}\cdot E^T)\\
		&=&\det(E\cdot \mc{D})\det(E^T)^{(m-1)^{n}}\\
		&=&\det(E)^{(m-1)^{(n-1)}} \det(\mc{D})\det(E)^{(m-1)^{n}}\\
		&=&\det(\mc{D})\\
		&=&\prod_{i=1}^{n}\Phi(s_{i})^{(m-1)^{(n-1)}}.
	\end{eqnarray*}
\end{proof}
Since GCD tensor is defined for any set $S$ of distinct positive integers, it is natural to investigate what happens if we remove the factor-closed restriction in Theorem \ref{detgcd}. Based on numerical evidence, we propose the following conjecture.
\begin{conjecture}
    Let $S=\{s_{1},{s_{2},}\ldots,s_{n}\}$ be an ordered set of distinct positive integers and $\mc{T}_{[S]}$ be the $m$th-order n-dimensional GCD tensor on $S.$ Then, 
    \begin{equation*}
        \det{(\mc{T}_{[S]})}\geq \prod_{i=1}^{n}\Phi(s_{i})^{(m-1)^{(n-1)}},
    \end{equation*}
    and equality holds if and only if $S$ is a factor-closed set.
\end{conjecture}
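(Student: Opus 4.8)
The plan is to prove the inequality and the equality characterization simultaneously by isolating, inside the complete positivity decomposition of $\mc{T}_{[S]}$, the ``diagonal part'' coming from $S$ itself and treating the remaining divisors as a positive semidefinite perturbation whose effect on the determinant is monotone.

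First, by Theorem \ref{thmsmlr} I may assume $s_1 < s_2 < \cdots < s_n$. Let $F = \{f_1,\ldots,f_l\}$ be the factor closure of $S$, i.e.\ the set of all divisors of the $s_i$; this is the smallest factor-closed set containing $S$, and $S$ is factor-closed precisely when $F = S$. Order $F$ so that the first block of columns is indexed by $K_0 := \{k : f_k \in S\}$ with $f_{k_i} = s_i$. Writing $E \in \mb{R}^{n\times l}$ as in Lemma \ref{factodet}, the square submatrix $E_{K_0} = ([\,s_j \mid s_i\,])_{i,j}$ is unit lower triangular (since $i<j \Rightarrow s_i < s_j \Rightarrow s_j \nmid s_i$), hence $\det E_{K_0} = 1$. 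Splitting the rank-one terms of \eqref{eqn:mre} according to whether $f_k \in S$ gives
\begin{equation*}
\mc{T}_{[S]} = \mc{A}_0 + \mc{B}, \qquad \mc{A}_0 := \sum_{k \in K_0}\Phi(f_k)(E_k)^{\otimes m}, \quad \mc{B} := \sum_{k \notin K_0}\Phi(f_k)(E_k)^{\otimes m}.
\end{equation*}
By Lemma \ref{lemdet1}, $\mc{A}_0 = E_{K_0}\cdot\mc{D}_{K_0}\cdot E_{K_0}^T$ with $\mc{D}_{K_0}$ the diagonal tensor carrying the $\Phi(s_i)$; since $E_{K_0}$ is invertible, $\mc{A}_0$ is positive definite, and Lemmas \ref{lemdet2}--\ref{lemdet3} (exactly as in the proof of Theorem \ref{detgcd}) give $\det(\mc{A}_0) = \prod_{i=1}^n \Phi(s_i)^{(m-1)^{(n-1)}}$. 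Moreover $\mc{B}$ is completely positive, and $\mc{B} = 0$ iff no $f_k \in F\setminus S$ divides any $s_i$; for the factor closure $F$ every $f_k$ divides some $s_i$, so $\mc{B} = 0$ exactly when $F = S$, i.e.\ exactly when $S$ is factor-closed.

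The crux is then the determinantal inequality $\det(\mc{A}_0 + \mc{B}) \ge \det(\mc{A}_0)$, with equality iff $\mc{B} = 0$. I would prove this by a monotonicity argument along the segment $\phi(t) := \det(\mc{A}_0 + t\mc{B})$, $t \in [0,1]$. For every $t$ the tensor $\mc{A}_0 + t\mc{B}$ is positive definite, since $(\mc{A}_0 + t\mc{B})x^{m} = \mc{A}_0 x^m + t\,\mc{B}x^m > 0$ for $x \ne 0$; thus the path stays in the positive definite cone. Differentiating and using the rank-one form of $\mc{B}$,
\begin{equation*}
\phi'(t) = \big\langle \nabla\!\det(\mc{A}_0 + t\mc{B}),\, \mc{B}\big\rangle = \sum_{k\notin K_0}\Phi(f_k)\,\adj(\mc{A}_0 + t\mc{B})\,(E_k)^m,
\end{equation*}
where I use that the gradient of the tensor determinant is its adjugate tensor, $\nabla\!\det(\mc{M}) = \adj(\mc{M})$, and that contracting the adjugate against a rank-one tensor $v^{\otimes m}$ returns the value $\adj(\mc{M})v^m$. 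Thus everything reduces to the \emph{key lemma}: if $\mc{M} \in \mb{R}^{[m,n]}$ is positive definite, then $\adj(\mc{M})v^m \ge 0$ for all $v$, and $\adj(\mc{M})v^m > 0$ for $v \ne 0$; equivalently, the adjugate of a positive definite tensor is positive (semi)definite. Granting this, $\phi' \ge 0$ on $[0,1]$, and $\phi'(t) > 0$ unless every column $E_k$ with $k \notin K_0$ vanishes, i.e.\ unless $\mc{B} = 0$. Integrating yields $\det(\mc{T}_{[S]}) = \phi(1) \ge \phi(0) = \det(\mc{A}_0) = \prod_{i} \Phi(s_i)^{(m-1)^{(n-1)}}$, strict unless $S$ is factor-closed; the factor-closed case is the equality already furnished by Theorem \ref{detgcd}. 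This would settle both the inequality and the ``if and only if''.

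The main obstacle is the key lemma, the tensor analogue of the elementary matrix fact $\adj(M) = \det(M)M^{-1} \succ 0$ for $M \succ 0$. For tensors the adjugate and the defining relation $\mc{M}\cdot\adj(\mc{M}) = \det(\mc{M})\,\mc{I}$ are considerably more delicate, and positivity is not automatic. I would try to establish it from this defining relation together with positivity of the relevant ``inverse'' in the general-product algebra on the positive definite cone; failing a clean algebraic route, I would argue spectrally, reducing to diagonal $\mc{M}$ (where $\adj(\mc{M})$ is again diagonal with positive entries by Lemma \ref{lemdet3}) and propagating positivity along congruences $\mc{M} \mapsto Q\cdot\mc{M}\cdot Q^{T}$. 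A secondary technical point is justifying the formula $\nabla\!\det = \adj$ and the differentiability of $\det$ in the form used above; these are expected to follow from the resultant description in Definition \ref{defndet} but must be verified carefully. As a consistency check, for $m=2$ the entire argument collapses to the classical Cauchy--Binet identity $\det(E\mc{D}E^{T}) = \sum_{|K|=n}(\det E_K)^2\prod_{k\in K}\Phi(f_k)$, which already yields the inequality and its equality case.
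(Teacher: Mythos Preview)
The paper does \emph{not} prove this statement: it is recorded only as a conjecture, supported by numerical evidence, immediately after the proof of Theorem~\ref{detgcd}. So there is no paper-proof to compare against; what follows is an assessment of your proposal on its own merits.

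Your reduction is sound up to the point where you split $\mc{T}_{[S]}=\mc{A}_0+\mc{B}$ with $\det(\mc{A}_0)=\prod_i\Phi(s_i)^{(m-1)^{n-1}}$ and $\mc{B}$ completely positive, and your identification of the equality case $\mc{B}=0\Leftrightarrow S$ factor-closed is correct. The substantive gap is exactly the ``key lemma'' you flag, and it is more serious than a technical loose end.

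First, the argument as written only addresses even $m$: for odd $m$ there is no positive-definite cone (indeed $\mc{A}_0x^m$ changes sign under $x\mapsto -x$), so the claim that $\mc{A}_0+t\mc{B}$ stays in that cone is vacuous, and the whole monotonicity strategy collapses. Second, even for even $m$, the identity $\nabla\det(\mc{M})=\adj(\mc{M})$ and the very definition of $\adj(\mc{M})$ for $m\ge 3$ are not available in the paper or in the references you invoke; the determinant here is a resultant (Definition~\ref{defndet}), and the cofactor/adjugate calculus from matrices does not carry over without substantial work. Third, your fallback route (``reduce to diagonal $\mc{M}$ and propagate positivity along congruences $Q\cdot\mc{M}\cdot Q^{T}$'') presupposes that every positive definite symmetric tensor is congruent to a diagonal one, which is false for $m\ge 3$: the orbit of diagonal tensors under $\mc{M}\mapsto Q\cdot\mc{M}\cdot Q^{T}$ has strictly smaller dimension than the space of symmetric tensors. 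So neither the direct nor the spectral route to the key lemma is presently viable. Your $m=2$ Cauchy--Binet check is fine, but it does not lift, since Cauchy--Binet is precisely the matrix miracle with no tensor analogue here. In short, the decomposition $\mc{T}_{[S]}=\mc{A}_0+\mc{B}$ is a reasonable starting point, but the monotonicity step $\det(\mc{A}_0+\mc{B})\ge\det(\mc{A}_0)$ remains genuinely open and is essentially equivalent to the conjecture itself.
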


Now we switch tracks. For a factor-closed set $S$, we now compute the determinant of $g[\mc{T}_{[S]}],$ where $g$ is a multiplicative map. 
\begin{theorem}\label{gcdmult}
	Let $S=\{s_{1},s_{2},\ldots, s_{n}\}$ be a factor-closed set of distinct positive integers and $\mc{T}_{[S]}$ be the $m$th-order n-dimensional GCD tensor on $S.$ Let  $g:\mathbb{N}\to \mathbb{R}$ be a multiplicative map. 
	Then
	\[\det (g[\mc{T}_{[S]}])= \prod_{i=1}^{n}(g*\mu)(s_{i})^{(m-1)^{(n-1)}}.\]
\end{theorem}
\begin{proof}
By an argument similar to  the proof of  Theorem \ref{thmsmlr}, $\det(g[\mc{T}_{[S]}])=\det(g[\mc{T}_{[S^\prime]}])$ for any rearrangement $S^\prime$ of $S$.  Thus,  without loss of generality  assume that $S=\{s_{1},s_{2},\ldots,s_{n}\}$ with $s_{1}<s_{2}<\cdots<s_{n}$. Again, by a similar argument as in the proof of Theorem \ref{detgcd}, we have
\[ g[\mc{T}_{[S]}]=\mc{D}_g\times_{1}E\times_{2}E\times_{3}\cdots\times_{m}E,\]
where  $E \in \mathbb{R}^{n \times n}$ is defined as in the proof of Theorem \ref{detgcd} and $\mc{D}_g\in \mb{R}^{[m,n]}$ is a diagonal tensor with diagonal entries $d_{ii\ldots i}={(}g*\mu{)}(s_{i}),$ $1\leq i\leq n.$ Since $\det(E)=1$, by Theorem \ref{thmdet2}, 

\begin{eqnarray*}
\det (g[\mc{T}_{[S]}]) &=& \det (\mc{D}_g)\\
&=&  \prod_{i=1}^{n}(g*\mu)(s_{i})^{(m-1)^{(n-1)}}.
\end{eqnarray*}
\end{proof}
In the final result of the section, we obtain the determinant of $\mc{T}_{[S]}$ for a GCD-closed set $S$. 
\begin{theorem}
	Let $S=\{s_{1},{s_{2},}\ldots,s_{n}\}$ be GCD-closed set of distinct positive integers. Let $\mc{T}_{[S]}$ be the $m$th-order $n$-dimensional GCD tensor on $S$.  Then \[\det{(\mc{T}_{[S]})}=\prod_{i=1}^{n} \Psi_{S}(s_{i})^{(m-1)^{(n-1)}},\] where $\Psi_{S}$ is the generalized Euler's totient function on $S$.
\end{theorem}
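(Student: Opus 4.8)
The plan is to run the argument of Theorem \ref{detgcd} essentially verbatim, with Euler's totient $\Phi$ replaced by the generalized totient $\Psi_{S}$ and Gauss's Lemma \ref{gauss} replaced by the defining recursion of $\Psi_{S}$. The single structural fact that makes the factor-closed hypothesis dispensable is that a GCD-closed set already contains the GCDs of all of its finite subsets: by induction on the number of arguments, $\GCD(s_{i_{1}},\dots,s_{i_{m}})\in S$ whenever $s_{i_{1}},\dots,s_{i_{m}}\in S$. Hence every entry of $\mc{T}_{[S]}$ lies in $S$, which is exactly what is needed to invoke the identity $\sum_{s_{i}\mid s_{j},\ s_{i}\in S}\Psi_{S}(s_{i})=s_{j}$ — a rearrangement of the definition of $\Psi_{S}$, valid for each $s_{j}\in S$.

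First I would reduce to the sorted case: since Theorem \ref{thmsmlr} holds for an arbitrary ordered set of distinct positive integers, and since being GCD-closed as well as the values of $\Psi_{S}$ depend only on $S$ as a set, we may permute the indices so that $s_{1}<s_{2}<\cdots<s_{n}$ without changing either side of the asserted identity. Next I would produce the factorization $\mc{T}_{[S]}=\mc{D}\times_{1}E\times_{2}E\times_{3}\cdots\times_{m}E$, where $E=(e_{ij})\in\mb{R}^{n\times n}$ has $e_{ij}=1$ if $s_{j}\mid s_{i}$ and $e_{ij}=0$ otherwise, and $\mc{D}\in\mb{R}^{[m,n]}$ is the diagonal tensor with $d_{ii\dots i}=\Psi_{S}(s_{i})$. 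This is the GCD-closed analogue of Lemma \ref{factodet}: it follows from the decomposition \eqref{eqn:mr} applied with $F=S$ (legitimate, since a GCD-closed set contains itself) together with the elementary identity $\sum_{k=1}^{n}c_{k}(E_{k})^{\otimes m}=\mc{C}\times_{1}E\times_{2}\cdots\times_{m}E$ for the diagonal tensor $\mc{C}$ with $c_{kk\dots k}=c_{k}$; alternatively one checks the $(i_{1},\dots,i_{m})$-entry directly, exactly as in the proof of Lemma \ref{factodet}, using $\GCD(s_{i_{1}},\dots,s_{i_{m}})\in S$ and the recursion for $\Psi_{S}$. Because $s_{1}<\cdots<s_{n}$, the relation $s_{j}\mid s_{i}$ forces $j\le i$, so $E$ is lower triangular with all diagonal entries equal to $1$, whence $\det(E)=\det(E^{T})=1$.

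Finally I would compute the determinant. By Lemma \ref{lemdet1}, $\mc{T}_{[S]}=E\cdot\mc{D}\cdot E^{T}$ in the general product; by associativity and Theorem \ref{thmdet2},
\[\det(\mc{T}_{[S]})=\det(E)^{(m-1)^{(n-1)}}\det(\mc{D})\det(E)^{(m-1)^{n}}=\det(\mc{D}),\]
and Lemma \ref{lemdet3} gives $\det(\mc{D})=\prod_{i=1}^{n}\Psi_{S}(s_{i})^{(m-1)^{(n-1)}}$, as claimed. I do not expect a genuine obstacle: the proof is a transcription of that of Theorem \ref{detgcd}, and the only place where one must pause is the verification that $\GCD(s_{i_{1}},\dots,s_{i_{m}})\in S$ so that the $\Psi_{S}$-recursion applies — which is immediate from GCD-closedness. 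As a sanity check, when $S$ happens to be factor-closed one has $\Psi_{S}=\Phi$ by the remark following Lemma \ref{gauss}, and the formula collapses to Theorem \ref{detgcd}.
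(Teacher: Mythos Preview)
Your proposal is correct and follows essentially the same route as the paper's own proof: reduce to the sorted case via Theorem~\ref{thmsmlr}, establish the factorization $\mc{T}_{[S]}=\mc{D}\times_{1}E\times_{2}\cdots\times_{m}E$ with $\mc{D}$ the diagonal tensor of generalized totients and $E$ the lower-triangular divisibility matrix, then compute the determinant using Lemma~\ref{lemdet1}, Theorem~\ref{thmdet2}, and Lemma~\ref{lemdet3}. Your explicit justification that $\GCD(s_{i_{1}},\dots,s_{i_{m}})\in S$ (so that the $\Psi_{S}$-recursion applies entrywise) is in fact more careful than the paper, which simply writes ``one can verify that'' for the factorization step.
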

\begin{proof}
By Theorem \ref{thmsmlr}, without loss of generality we may assume $S=\{s_{1},s_{2},\ldots,s_{n}\}$ with $s_{1}<s_{2}<\cdots<s_{n}$. Then, the matrix $E=(e_{ij})$ such that $e_{ij}=1$ if $s_{j}$ divides $s_{i}$ and $e_{ij}=0$ otherwise becomes an $n\times n$ lower triangular matrix with diagonal entries $1$. Thus, $\det(E)=1.$ Construct a diagonal tensor $\mc{D}\in \mb{R}^{[m,n]}$ with $d_{ii\ldots i}=\Psi_{S}(s_{i}),$ $1\leq i\leq n.$ Then, one can verify that $$\mc{T}_{[S]}=\mc{D}\times_{1}E\times_{2}E\times_{3}\cdots\times_{m}E.$$ 
	Using Lemma \ref{lemdet3}, we have $\det{(\mc{D})}=\prod\limits_{i=1}^{n}\Psi_{S}(s_{i})^{(m-1)^{(n-1)}}$. Then 
	\begin{eqnarray*}
		\det(\mc{T}_{[S]})&=&\det(\mc{D}\times_{1}E\times_{2}E\times_{3}\cdots\times_{m}E)\\
		&=&\det(E\cdot \mc{D}\cdot E^{T}),\mbox{where} \cdot \mbox{denotes the general product of tensors}\\
		&=&\det(\mc{D})\det(E)^{(m-1)^{(n-1)}m}\\
		&=&\det(\mc{D})\\&=&\prod_{i=1}^{n}\Psi_{S}(s_{i})^{(m-1)^{(n-1)}}.
	\end{eqnarray*}
\end{proof}

\section{Generalization of GCD tensor on meet semi-lattice}\label{sec:poset}
In this section, we generalize the notion of GCD tensors to a general partially ordered set (poset) and call it as {\it meet tensor}. Let us begin with some basic definitions.
\begin{definition}
        A poset $(U,p)$ with partial order $p$ is called meet semi-lattice if for all $u_{1},u_{2}\in U$, there exists a unique $w\in U$ such that
        \begin{enumerate}[(i)]
            \item $wpu_{1}, wpu_{2},$ and 
            \item if $zpu_{1}, zpu_{2}$ for some $z\in U,$ then $zpw.$
        \end{enumerate}
        Here $w$ is called meet of $u_{1},u_{2}$ and is denoted by $(u_{1},u_{2})_{p}.$
    \end{definition}
\begin{definition}
       Let $(U,p)$ be a meet semi-lattice and $S=\{s_{1},{s_{2},}\ldots,s_{n}\}$ be a subset of $U.$ Let $g$ be any real valued function on $U.$ Then the $m$th-order $n$-dimensional tensor $\mc{T}_{[S]}^{g}=(t_{i_{1}i_{2}\ldots i_{m}})\in\mb{R}^{[m,n]},$ where 
        \begin{equation*}
            t_{i_{1}i_{2}\ldots i_{m}}=g((s_{i_{1}},s_{i_{2}},\ldots,s_{i_{m}})_{p}), ~1\leq i_{k}\leq n, \mbox{ for } k=1,2,\ldots,m,
        \end{equation*}
        is called the {\it meet tensor} on $S$ with respect to $g$.
\end{definition}
To discuss the structure theorem and the determinant of a meet tensor on a generalized poset, we require the following two definitions.
    \begin{definition}\cite[Definition 7]{bhat1991}
Let $(U,p)$ be a meet semi-lattice and $S$ be a subset of $U.$
\begin{enumerate}[(i)]
	\item The set $S$ is called {\it meet-closed} if for every $s_{1},s_{2}\in S$ their meet $(s_{1},s_{2})_{p}$ is also in $S$.
	
	\item Let $g$ be a real valued function on $U.$ Then the generalized Euler's totient function $\Psi_{S,g}$ on $S$ with respect to $g$  is given by
	\begin{equation*}
		\Psi_{S,g}(s_{i})=g(s_{j})-\sum_{\substack{x_{j}px_{i}\\ x_{i}\neq x_{j}}}\Psi_{S,g}(s_{j}).
	\end{equation*}
	The empty summation is taken to be zero. 
\end{enumerate}
    \end{definition}
    
    We now define a matrix which will be crucial for this section. Let $(U,p)$ be a meet semi-lattice and $S=\{s_{1},{s_{2},}\ldots,s_{n}\}$ be a subset of $U$, and  $F=\{f_{1},f_{2},\ldots,f_{l}\}$ be a meet-closed finite subset of $U$ containing $S$. Define an $n\times l$ matrix $E:=(e_{ij})$  such that
        \begin{equation*}
            e_{ij}:=\begin{cases}
                1,&\text{if }f_{j}ps_{i},\\
                0,&\text{otherwise.}
            \end{cases}
        \end{equation*}
With these basic definitions, we now extend the results of GCD tensor to meet tensor. The first result of this section gives a structure of the meet tensor.
\begin{theorem}\label{thm:mtscp}
    Let $(U,p)$ be a meet semi-lattice and $S=\{s_{1},s_{2},\ldots,s_{n}\}$ be a subset of $U.$ Let $g$ be any real valued function on $U$ and $F=\{f_{1},f_{2},\ldots,f_{l}\}$ be a meet-closed finite subset of $U$ containing $S$. Then the $m$th-order $n$-dimensional meet tensor $\mc{T}_{[S]}^{g}$ can be decomposed as
    \begin{equation*}
        \mc{T}_{[S]}^{g}=\sum_{k=1}^{l}\Psi_{F,g}(f_{k})(E_{k})^{\otimes m}.
    \end{equation*}
   where $E_{k}$ is the $k$th column of the matrix $E.$
\end{theorem}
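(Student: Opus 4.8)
The plan is to mimic the proof of Theorem \ref{thm:gcdcpt}, replacing the divisibility relation on integers by the partial order $p$, the ordinary Euler totient $\Phi$ by the generalized totient $\Psi_{F,g}$ on the meet-closed set $F$, and Lemma \ref{gauss} (Gauss's identity $\sum_{d\mid k}\Phi(d)=k$) by the defining recursion of $\Psi_{F,g}$. Fix $1\le i_1,\ldots,i_m\le n$. As in the proof of Theorem \ref{thm:gcdcpt}, the $(i_1,\ldots,i_m)$th entry of $(E_k)^{\otimes m}$ equals $e_{i_1 k}e_{i_2 k}\cdots e_{i_m k}$, which is $1$ precisely when $f_k\,p\,s_{i_1},\,f_k\,p\,s_{i_2},\,\ldots,\,f_k\,p\,s_{i_m}$ all hold, and $0$ otherwise. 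Therefore
\[
\left(\sum_{k=1}^{l}\Psi_{F,g}(f_{k})(E_{k})^{\otimes m}\right)_{i_{1}\ldots i_{m}}
=\sum_{\substack{1\le k\le l\\ f_k\,p\,s_{i_1},\,\ldots,\,f_k\,p\,s_{i_m}}}\Psi_{F,g}(f_{k}).
\]

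The key step is to show that the index set of this last sum is exactly $\{k : f_k\,p\,(s_{i_1},\ldots,s_{i_m})_p\}$. For this I would use the meet semi-lattice structure: the iterated meet $w:=(s_{i_1},\ldots,s_{i_m})_p$ is well defined (the binary meet is associative and commutative on a meet semi-lattice), and by the universal property of the meet, an element $f_k\in U$ satisfies $f_k\,p\,s_{i_j}$ for every $j$ if and only if $f_k\,p\,w$. Here one also needs that $w$ lies in $F$: since $F$ is meet-closed and contains $S$, and $w$ is obtained by finitely many binary meets of elements of $S\subseteq F$, we get $w\in F$, so $w=f_{k_0}$ for some index $k_0$. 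Consequently
\[
\left(\sum_{k=1}^{l}\Psi_{F,g}(f_{k})(E_{k})^{\otimes m}\right)_{i_{1}\ldots i_{m}}
=\sum_{\substack{1\le k\le l\\ f_k\,p\,w}}\Psi_{F,g}(f_{k})
=\sum_{\substack{x\in F\\ x\,p\,w}}\Psi_{F,g}(x).
\]

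Finally I would invoke the defining recursion of $\Psi_{F,g}$, namely $\Psi_{F,g}(s_i)=g(s_i)-\sum_{x\,p\,s_i,\,x\neq s_i}\Psi_{F,g}(x)$, which rearranges to $\sum_{x\in F,\,x\,p\,s_i}\Psi_{F,g}(x)=g(s_i)$ for every $s_i\in F$; applying this with $s_i=w=f_{k_0}$ gives $\sum_{x\in F,\,x\,p\,w}\Psi_{F,g}(x)=g(w)=g((s_{i_1},\ldots,s_{i_m})_p)=(\mc{T}_{[S]}^{g})_{i_1\ldots i_m}$. Since $i_1,\ldots,i_m$ were arbitrary, the two tensors agree entrywise, proving the decomposition.

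The main obstacle I anticipate is the bookkeeping around the iterated meet: one must be careful that $(s_{i_1},\ldots,s_{i_m})_p$ is unambiguously defined (associativity/commutativity of the binary meet, which should be recorded or is standard for meet semi-lattices), that it genuinely lies in $F$ (using meet-closedness and that repeated meets stay inside $S$'s meet-closure, hence inside $F$), and that the universal property is applied in the correct direction to identify the lower set $\{x\in F : x\,p\,w\}$ with the common lower bounds in $F$ of $s_{i_1},\ldots,s_{i_m}$. Once these lattice-theoretic points are nailed down, the rest is a direct transcription of the integer-case argument, with $\Psi_{F,g}$'s recursion playing the role Gauss's identity played before. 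Note also that, unlike Theorem \ref{thm:gcdcpt}, no positivity of $\Psi_{F,g}$ is claimed here, so there is nothing to check on that front — the statement is purely a structural decomposition.
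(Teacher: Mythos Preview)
Your proposal is correct and follows exactly the route the paper intends: the paper omits the proof of this theorem, stating that it is ``verbatim to the proofs for GCD tensors,'' and your argument is precisely that transcription of the proof of Theorem~\ref{thm:gcdcpt}, with the divisibility order replaced by $p$, the meet replacing $\GCD$, and the recursion defining $\Psi_{F,g}$ replacing Gauss's identity. The extra care you take with associativity of the iterated meet, the universal property, and the fact that $w\in F$ is appropriate and fills in details the paper leaves implicit.
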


Next, we turn our attention to the determinant of the meet tensor. Like the case of GCD tensors on a set of positive integers, the meet tensor on a meet semi-lattice also factors as Lemma \ref{factodet}.
\begin{lemma}\label{lma:factmtnsr}
        Let $(U,p)$ be a meet semi-lattice, $g$ be any real valued function on $U$, and $S=\{s_{1},s_{2},\ldots,s_{n}\}$ be a subset of $U.$ Let $F=\{f_{1},f_{2},\ldots,f_{l}\}$ be a meet-closed finite subset of $U$ containing $S$. Then the meet tensor $\mc{T}_{[S]}^{g}$ can be expressed as
        \begin{equation*}
            \mc{T}_{[S]}^{g}=\mc{D}\times_{1} E\times_{2} E\times_{3} \cdots \times_{m} E,
        \end{equation*}
        where $\mc{D}$ is an $m$th-order $l$-dimensional  diagonal tensor with its diagonal entry $d_{ii\ldots i}=\Psi_{F,g}(f_{i}).$
    \end{lemma}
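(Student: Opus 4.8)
The plan is to mirror the proof of Lemma \ref{factodet}, replacing divisibility by the partial order $p$, GCD by meet, and the Gauss identity $\sum_{d\mid k}\Phi(d)=k$ by the defining recursion for the generalized Euler totient $\Psi_{F,g}$. Concretely, I fix indices $1\le i_1,\ldots,i_m\le n$ and compute the $(i_1,\ldots,i_m)$-th entry of $\mc{D}\times_1 E\times_2 E\times_3\cdots\times_m E$ directly from the definition of the $k$-mode product:
\[
(\mc{D}\times_1 E\times_2 E\times_3\cdots\times_m E)_{i_1\ldots i_m}=\sum_{j_1,\ldots,j_m=1}^{l} d_{j_1\ldots j_m}\, e_{i_1 j_1} e_{i_2 j_2}\cdots e_{i_m j_m}.
\]
Since $\mc{D}$ is diagonal, only the terms with $j_1=\cdots=j_m=k$ survive, leaving $\sum_{k=1}^{l}\Psi_{F,g}(f_k)\,e_{i_1 k}\cdots e_{i_m k}$. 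By the definition of $E$, the product $e_{i_1 k}\cdots e_{i_m k}$ equals $1$ exactly when $f_k p s_{i_1},\ldots,f_k p s_{i_m}$ simultaneously, and $0$ otherwise; by the universal property of the meet this condition is equivalent to $f_k p (s_{i_1},s_{i_2},\ldots,s_{i_m})_p$. Hence the entry equals $\sum_{f_k p (s_{i_1},\ldots,s_{i_m})_p}\Psi_{F,g}(f_k)$.

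It remains to identify this sum with $g((s_{i_1},\ldots,s_{i_m})_p)$, which is $(\mc{T}_{[S]}^g)_{i_1\ldots i_m}$. Write $w=(s_{i_1},\ldots,s_{i_m})_p$; since $F$ is meet-closed and contains $S$, one checks $w\in F$, so $w=f_t$ for some $t$. Then I need the identity
\[
\sum_{f_k p f_t}\Psi_{F,g}(f_k)=g(f_t),
\]
which follows by induction on the poset $F$ (ordered by $p$) directly from the recursion $\Psi_{F,g}(f_t)=g(f_t)-\sum_{f_k p f_t,\, f_k\neq f_t}\Psi_{F,g}(f_k)$: rearranging gives exactly the claimed telescoping sum over all $f_k p f_t$. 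This is the analogue of how Lemma \ref{gauss} was used in Lemma \ref{factodet}, and it is the only place where meet-closedness of $F$ is essential (it guarantees $w\in F$ and that the sum defining $\Psi_{F,g}$ ranges over elements actually present in $F$).

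The main obstacle—though it is a mild one—is verifying carefully that $w=(s_{i_1},\ldots,s_{i_m})_p$ genuinely lies in $F$ when $F$ is only assumed meet-closed for pairs: one argues that the $m$-fold meet is obtained by iterating the binary meet, $(s_{i_1},s_{i_2},\ldots,s_{i_m})_p=((\cdots(s_{i_1},s_{i_2})_p,s_{i_3})_p\cdots),s_{i_m})_p$, using associativity of meets in a meet semi-lattice, and each intermediate meet stays in $F$ by meet-closedness. The equivalence $\big(f_k p s_{i_1}\wedge\cdots\wedge f_k p s_{i_m}\big)\iff f_k p w$ is then immediate from clause (ii) of the meet definition. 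With these two observations in place the displayed chain of equalities closes, establishing $\mc{T}_{[S]}^g=\mc{D}\times_1 E\times_2 E\times_3\cdots\times_m E$.
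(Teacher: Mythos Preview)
Your proposal is correct and follows exactly the approach the paper intends: the paper explicitly omits the proof of this lemma, stating that it is verbatim to the proof of Lemma~\ref{factodet} for GCD tensors, and your argument is precisely that translation---replace divisibility by the partial order $p$, GCD by meet, and Gauss's identity (Lemma~\ref{gauss}) by the rearranged recursion defining $\Psi_{F,g}$. Your additional care in checking that the $m$-fold meet $w$ lies in $F$ (via iterated binary meets and meet-closedness) fills in a detail the paper leaves implicit.
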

Using Lemma \ref{lma:factmtnsr}, we now derive the determinant of a meet tensor on a meet-closed set.
\begin{theorem}\label{thm:detmtnsr}
    Let $(U,p)$ be a meet semi-lattice and $S=\{s_{1},{s_{2},}\ldots,s_{n}\}$ be a meet-closed subset of $U$. Let $g$ be any real valued function on $U$ and let  $\mc{T}_{[S]}^{g}$ be the $m$th-order $n$-dimensional meet tensor with respect to $g$. Then
    \begin{equation*}
        \det(\mc{T}_{[S]}^{g})=\prod_{i=1}^{n}\Psi_{S,g}(s_{i})^{(m-1)^{(n-1)}}.
    \end{equation*}
\end{theorem}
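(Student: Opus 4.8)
The plan is to mimic exactly the proof of Theorem~\ref{detgcd}, replacing divisibility by the partial order $p$ and $\Phi$ by the generalized totient $\Psi_{S,g}$. First I would invoke the meet-analogue of Theorem~\ref{thmsmlr} (permutational congruence under rearrangement of $S$, which preserves the determinant via Lemma~\ref{lemdet2} since $P\cdot\mc{I}\cdot P^T=\mc{I}$) to reduce to the case where the elements of $S$ are listed in an order compatible with $p$: that is, relabel $S=\{s_1,\dots,s_n\}$ so that $s_i p s_j$ implies $i\le j$ (a linear extension of the partial order restricted to $S$). Since $S$ is itself meet-closed, we may take $F=S$ in Lemma~\ref{lma:factmtnsr}, so the matrix $E=(e_{ij})\in\mb{R}^{n\times n}$ with $e_{ij}=1$ iff $s_j p s_i$ is square.

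Next I would observe that with this ordering $E$ is lower triangular: $e_{ij}=1$ forces $s_j p s_i$, hence $j\le i$; and the diagonal entries are $e_{ii}=1$ since $s_i p s_i$ by reflexivity. Therefore $\det(E)=1$. By Lemma~\ref{lma:factmtnsr} we have $\mc{T}_{[S]}^g=\mc{D}\times_1 E\times_2 E\times_3\cdots\times_m E$ where $\mc{D}\in\mb{R}^{[m,n]}$ is the diagonal tensor with $d_{ii\ldots i}=\Psi_{S,g}(s_i)$. By Lemma~\ref{lemdet1}, this $k$-mode product rewrites as the general product $\mc{T}_{[S]}^g=E\cdot\mc{D}\cdot E^T$.

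Now I would compute the determinant using associativity of the general product and Theorem~\ref{thmdet2}, exactly as in the proof of Theorem~\ref{detgcd}:
\begin{eqnarray*}
\det(\mc{T}_{[S]}^g)&=&\det\big((E\cdot\mc{D})\cdot E^T\big)\\
&=&\det(E\cdot\mc{D})\,\det(E^T)^{(m-1)^n}\\
&=&\det(E)^{(m-1)^{(n-1)}}\det(\mc{D})\,\det(E)^{(m-1)^n}\\
&=&\det(\mc{D}).
\end{eqnarray*}
Finally, Lemma~\ref{lemdet3} applied to the diagonal tensor $\mc{D}$ gives $\det(\mc{D})=\prod_{i=1}^n\Psi_{S,g}(s_i)^{(m-1)^{(n-1)}}$, which is the claimed formula.

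The main obstacle — really the only nonroutine point — is justifying that a linear extension of the partial order exists and that relabelling $S$ accordingly is legitimate, i.e. that Theorem~\ref{thmsmlr} genuinely carries over to the meet-semilattice setting (its proof only used that GCD is symmetric in its arguments and that permutation matrices satisfy $P\cdot\mc{I}\cdot P^T=\mc{I}$, both of which hold verbatim here). One should also double-check that $E$ is lower triangular under a linear extension: this uses only that $p$ restricted to $S$ is a genuine partial order, so any total order refining it makes the incidence matrix triangular. Everything else is a direct transcription of the GCD case, so I would present the argument compactly, pointing to the proofs of Theorems~\ref{thmsmlr} and~\ref{detgcd} for the details that transfer unchanged.
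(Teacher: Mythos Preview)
Your proposal is correct and is exactly the approach the paper has in mind: the authors omit the proof of Theorem~\ref{thm:detmtnsr}, stating it is verbatim to that of Theorem~\ref{detgcd}, and your argument---replacing the ordering $s_1<\cdots<s_n$ by a linear extension of $p$ on $S$ and $\Phi$ by $\Psi_{S,g}$---is precisely that transcription. The linear-extension point you flag is the only step requiring a word beyond the integer case, and you handle it correctly.
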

We omit the proofs of Theorem \ref{thm:mtscp}, Lemma \ref{lma:factmtnsr} and Theorem \ref{thm:detmtnsr} as they are verbatim to the proofs for GCD tensors.
\section*{Acknowledgements}
 We also thank the anonymous referee for providing useful comments and a reference that improved the manuscript. P.N. Choudhury was partially supported by Prime Minister Early Career Research Grant (PMECRG) ANRF/ECRG/2024/002674/PMS (ANRF, Govt. of India), INSPIRE Faculty Fellowship research grant DST/INSPIRE/04/2021/002620 (DST, Govt. of India), and IIT Gandhinagar Internal Project: IP/IP/50025. K. Panigrahy was supported by IIT Gandhinagar Post-Doctoral Fellowship IP/IP/50020.
\bibliographystyle{plain}
\bibliography{ref}
\end{document}